\newtheorem{example}{Example}[section]
\newtheorem{remark}{Remark}[section]
\title{Continuity Properties of Value Functions in Information Structures for Zero-Sum and General Games and Stochastic Teams \thanks{Supported
    by the Natural Sciences and Engineering Research Council of Canada. A preliminary version of this paper was presented at the 2021 IEEE Conference on Decision and Control.}}
\author{Ian Hogeboom-Burr \and Serdar Y\"{u}ksel\thanks{The authors are with the Dept. of Mathematics and Statistics, Queen's University, Kingston K7L 3N6, ON, Canada, {\tt\small \{15ijhb,yuksel\}@queensu.ca}.}}
\begin{document}
\maketitle

\begin{abstract}
We study continuity properties of stochastic game problems with respect to various topologies on information structures, defined as probability measures characterizing a game. We will establish continuity properties of the value function under total variation, setwise, and weak convergence of information structures. Our analysis reveals that the value function for a bounded game is continuous under total variation convergence of information structures in both zero-sum games and team problems. Continuity may fail to hold under setwise or weak convergence of information structures, however, the value function exhibits upper semicontinuity properties under weak and setwise convergence of information structures for team problems, and upper or lower semicontinuity properties hold for zero-sum games when such convergence is through a Blackwell-garbled sequence of information structures. If the individual channels are independent, fixed, and satisfy a total variation continuity condition, then the value functions are continuous under weak convergence of priors. We finally show that value functions for players may not be continuous even under total variation convergence of information structures in general non-zero-sum games. 
\end{abstract}

\section{Introduction}

In stochastic decision, game, and control problems, the probability measures and models on the exogenous state and measurement variables (on which decisions are measurable) may be called information structures. The setup will be made precise later in the paper.

For single-decision maker setups, comparison of information structures (also called statistical experiments / observation channels) was conclusively studied by Blackwell in his seminal paper \cite{Blackwell1951} for finite probability models. 
Since then, there has been significant interest in the comparison, value and regularity of information structures in various research communities and into more general models \cite{strassen1965existence,torgersen1991comparison,LeCamReview}. For finite games, \cite{pkeski2008comparison} generalized Blackwell's result to zero-sum games with a complete characterization. This result was generalized to the standard Borel zero-sum game setup in \cite{HBY2020arXiv}, where a partial converse of Blackwell's information structure ordering and relations with a well-known result due to Strassen \cite{strassen1965existence} were also established with a corollary being that more information does not hurt the agent receiving it. 


A related problem involves not only the comparison of two information structures but the continuity properties of optimal solutions/equilibrium solutions in information structures under various topologies, which is studied in this paper.

\section{Preliminaries, Literature Review and Statement of Main Results}

\subsection{Information Structure Models and Assumptions}

Consider $n \in \mathbb{Z}_{\geq 1}$ players in a single-stage game. Let $\mathbb{X}$ denote a standard Borel state space, with $x \sim \zeta$ an $\mathbb{X}$-valued random variable known as the state of nature. Recall that a standard Borel space is a Borel subset of a complete, separable, metric (Polish) space. Let $\zeta$ be the {\it prior} probability distribution on a hidden state, which is common knowledge to all players. We denote by $\mathbb{Y}^i$ Player $i$'s standard Borel measurement space, for $i \in \{1, \dots, n\}$. We use $-i$ to denote all players other than Player $i$. For each player, we define their measurement $y^i$, which is a $\mathbb{Y}^i$-valued random variable, where: $y^i = g^i(x, v^i)$ for some noise variable $v^i$ (and which, without any loss, can be taken to be $[0,1]$-valued). By stochastic realization arguments \cite{BorkarRealization}, the above formulation is equivalent to viewing $y^i$ as being generated by a measurement channel $Q^i$, which is a Markov transition kernel from $\mathbb{X}$ to $\mathbb{Y}^i$.
We denote the joint probability measure on $\mathbb{X} \times \mathbb{Y}^1 \times \dots \times \mathbb{Y}^n$ by $\mu(dx, dy^1, \dots, dy^n)$, and define this as the \textit{information structure}.

Throughout this paper, we allow for the entire information structure to vary as it converges, meaning the prior on the state space $\zeta$ is not fixed. The sole exceptions are Theorem \ref{zerosumweak} and Theorem \ref{zerosumsetwise}, where the prior must be fixed for the proofs to hold. 

For $\zeta\in  \mathcal{P}(\mathbb{X})$ and kernel $Q$, we let $\zeta Q$ denote the joint distribution induced on
$(\mathbb{X}\times \mathbb{Y}, \mathcal{B}(\mathbb{X}\times
\mathbb{Y}))$ by channel $Q$ with input distribution $\zeta$:
\[  \zeta Q(A) = \int_{A} Q(dy|x)\zeta (dx), \quad A\in  \mathcal{B}(\mathbb{X} \times \mathbb{Y}). \]

Each player also has a standard Borel action space $\mathbb{U}^i$, and a personal cost function $c^i(x,u^1,\dots,u^n): \mathbb{X} \times \mathbb{U}^1 \times \dots \times \mathbb{U}^n \rightarrow \mathbb{R}$. 

For fixed prior, state space, and measurement spaces $\zeta$, $\mathbb{X}$, $\mathbb{Y}^1, \dots, \mathbb{Y}^n$, a \textit{game} is a $2n$-tuple consisting of a measurable and bounded cost function and an action space for each player, $G = (c^1, \dots, c^n, \mathbb{U}^1, \dots, \mathbb{U}^n)$. 

The player's goal is to minimize their expected cost functional for a given game $G$:
\begin{align*}
J^i(G, \mu, \gamma^1, \dots, \gamma^n) := E^{\mu, \bar{\gamma}}[c^i(x,\gamma^1(y^1), \dots, \gamma^n(y^n))]
\end{align*}
where the players select their policies from the set of all admissible \textit{policies} $\Gamma^i := \{\gamma: \mathbb{Y}^i \rightarrow \mathbb{U}^i\}$, which are measurable functions from a player's measurement space to their action space. We refer to $u^i = \gamma^i(y^i)$ as the action of the player, and $\gamma^i$ as their policy.We now define the following assumptions, which will be used within the results:

\textbf{Assumptions.}

\textit{A1}: The cost functions for players are measurable and bounded. 

\textit{A2}: The cost functions for players are continuous and bounded. 

\textit{A3}: The information structure is absolutely continuous with respect to a product measure:
\[P(dy^1, dy^2,\cdots, dy^n, dx) \ll \bar{Q}^1(dy^1)\bar{Q}^2(dy^2)\cdots\bar{Q}^n(dy^n)\zeta(dx),\]
for reference probability measures $\bar{Q}^i$, $i=1,2,\cdots$. That is, e.g., for $n=2$, there exists an integrable $f$ which satisfies for every Borel $A, B, C$
\begin{align*}
&P(y^1 \in B, y^2 \in C, x \in A) = \int_{A,B,C} f(x,y^1,y^2) \zeta(dx)\bar{Q}^1(dy^1)\bar{Q}^2(dy^2). 
\end{align*}

\textit{A4}: The action space of each agent is compact.


\textit{A5}: The bounded measurable cost function is continuous in players' actions for every state of nature $x$, and the action space of each player is compact. 

\textit{A6}: The individual player channels $Q^i$ are fixed and independent (i.e., given $x$, all the measurement variables $y^i$ are conditionally independent). Furthermore, each $Q^i$ is continuous in total variation, i.e., as $x_m \rightarrow x$ then $\|Q^i(\cdot|x) - Q^i(\cdot|x_m)\|_{TV} \rightarrow 0$.

\textit{A7}: The action spaces for the players are convex subsets of $\mathbb{R}^k$, for some $k$.

\subsection{Convergence of information structures}\label{convergenceIS}

For a standard Borel space $\mathbb{X}$, let $\mathcal{P}(\mathbb{X})$ denote the family of
all probability measures on $(\mathbb{X},\mathcal{B}(\mathbb{X}))$, where $\mathcal{B}(\mathbb{X})$ denotes the Borel sigma-field over $\mathbb{X}$. 
Let $\{\mu_n,\, n\in \mathbb{N}\}$ be a sequence in $\mathcal{P}(\mathbb{X})$. Recall that
$\{\mu_n\}$ is said to  converge
  to $\mu\in \mathcal{P}(\mathbb{X})$ \emph{weakly} if
\begin{eqnarray}\label{Defnconv}
 \int_{\mathbb{X}} c(x) \mu_n(dx)  \to \int_{\mathbb{X}}c(x) \mu(dx)
\end{eqnarray}
for every continuous and bounded $c: \mathbb{X} \to \mathbb{R}$.
On the other hand, $\{\mu_n\}$ is said to  converge
  to $\mu\in \mathcal{P}(\mathbb{X})$  \emph{setwise} if (\ref{Defnconv}) holds
for every measurable and bounded $c: \mathbb{X} \to \mathbb{R}$. Setwise convergence  can also be defined through pointwise convergence on Borel subsets of $\mathbb{X}$, that is $\mu_n(A) \to \mu(A)$, for all $A \in{\cal B}(\mathbb{X})$.
For two probability measures $\mu,\nu \in
\mathcal{P}(\mathbb{X})$, the \emph{total variation} metric is given by
\begin{eqnarray}
\|\mu-\nu\|_{TV}:= 2 \sup_{B \in {\cal B}(\mathbb{X})} |\mu(B)-\nu(B)| =  \sup_{f: \, \|f\|_{\infty} \leq 1} \bigg| \int f(x)\mu(dx) -
\int f(x)\nu(dx) \bigg|, \label{TValternative}
\end{eqnarray}
A sequence  $\{\mu_n\}$ is said to  converge
  to $\mu\in \mathcal{P}(\mathbb{X})$ in total variation if
$\| \mu_n - \mu   \|_{TV}  \to 0.$ Setwise convergence is equivalent to pointwise convergence on
Borel sets  whereas total variation requires
uniform convergence on  Borel sets. Thus  these  three convergence
notions are in increasing order of strength. 

We also recall the $w$-$s$ topology introduced by Sch\"al \cite{schal1975dynamic}: the $w$-$s$ topology on the set of probability measures $\cal{P}(\mathbb{X} \times \mathbb{Y})$
is the coarsest topology under which, for every measurable and bounded $f(x,y)$ which is continuous in $y$
for every $x$, the map $\int f(x,y) \mu(dx,dy): \cal{P}(\mathbb{X} \times \mathbb{Y}) \to \mathbb{R}$
is continuous (but unlike the weak convergence topology, $f$ does not need to be continuous in $x$). An important relevant result \cite[Theorem 3.10]{schal1975dynamic} or \cite[Theorem 2.5]{balder2001} is that if $\mu_n \to \mu$ in weakly but if the marginals $\mu_n(dx \times \mathbb{Y}) \to \mu(dx \times \mathbb{Y})$ setwise, then the convergence is also in the $w$-$s$ sense. 

\begin{definition}
A sequence of information structures $\mu_n$ in ${\cal P}(\mathbb{X} \times \mathbb{Y}^1 \times \dots \times \mathbb{Y}^n)$ converges to $\mu$ weakly/setwise/in total variation if $\mu_n \to \mu$ in the corresponding sense.
\end{definition}

\subsection{Literature review}

\subsubsection{Single Decision-Maker/Player Problems}
For a single decision maker setup, \cite[Theorem 3.4]{YukselOptimizationofChannels}, for a fixed prior, showed that the value function is continuous under total variation convergence of information structures. Counterexamples (see \cite[Sections 3.1.1. and 3.2.1]{YukselOptimizationofChannels}) also revealed that continuity is not necessarily preserved under weak or setwise convergence, but the value function is upper semicontinuous under both under mild conditions. 

The case where priors also change has been studied in \cite{KYSICONPrior}, where conditions for optimality under both total variation and weak convergence were established: total variation convergence of the priors always leads to convergence \cite[Theorem 2.9]{KYSICONPrior}, but continuity under the weak convergence of priors requires a total variation continuity on the information channels \cite[Theorem 2.5]{KYSICONPrior}.

In single player setups, a related result is due to Le Cam \cite{LeCamSufficiency}, to be discussed further below in some detail in the context of zero-sum games. We also note that a related result due to Wu and Verd\'u \cite{WuVerdu} establishes continuity of the quadratic (minimum mean-square estimation) error under weak convergence of the priors when the channel is additive, the additive noise has a finite variance and it admits a continuous and bounded density function (such as a Gaussian). \cite{AbayaWise}, \cite{GrayDavisson} study the effects of uncertainties in the input distribution in the quantizer design, which may be viewed as a decision problem. \cite{dupuis2000kernel,UgrPet2001} study the effects of variations in system models, and thus also information structures, in a general relative entropy perturbation framework in the context of minimax LQG control; we note that relative entropy is a more stringent distance measure via Pinsker's inequality \cite{csiszar1967information}.

In this context, for completeness, it is may also be useful to note some information theoretic studies: In addition to the relationships between information structures via Blackwell's comparison of experiments and Le Cam's work on deficiencies, there is a comparison criterion for channels (and the joint map from a source and channel output after coding and decoding) developed by Shannon in \cite{ShannonComparison} and expanded upon in \cite{raginsky2011shannon}. This leads to a comparison criterion for information structures in single-player decision problems, based on communication-theoretic relaxations of such problems.
 


\subsubsection{Zero-Sum Games}

The specific form of an information structure has been shown to have subtle impact on (different types of) equilibria in games, as well as on their existence, uniqueness, and characterization (see for example \cite{witsenhausen1971relations, basols99, basar1989time}). In spite of the existence of several studies on the impact of information structure on equilibria of games, there does not exist a complete theory of such impact. The theory of and regularity properties on information structures in stochastic games is significantly more challenging when compared with that for stochastic team theory, since the value of information is not necessarily positive and informational changes lead to variations in the equilibrium behaviour in intricate ways \cite{Bassan2003,kamien1990value,tBasarStochasticDiffGames}). We also recall here some comprehensive texts studying informational aspects of stochastic game theory \cite{mertens2015repeated,osborne1994course,fudenberg1991game,basols99} which also study various informational regularity and existence properties of equilibrium solutions for stochastic games as well as continuity and comparison results for zero-sum games \cite[Chapter 3]{mertens2015repeated}.

Zero-sum games, however, form a class of problems where strong regularity properties can be established. For example, under mild conditions as reviewed below, every zero-sum game, and accordingly every information structure, has a value and as noted earlier a theory of ordering of information structures can be obtained and, in particular, no additional information can hurt a player, though this argument is quite subtle in the game case when compared with the team setup as {\it choose to ignore the additional information and thus no information cannot hurt} argument (typically used in control theory or team theory) is not applicable; though the implication that more information cannot hurt turns out to be correct via a much more tedious (geometric) argument; see \cite{pkeski2008comparison}  \cite{HBY2020arXiv}. 

In view of the above, we can somehow interpret zero-sum games as an intermediate category between team problems and non-zero sum game problems as far as their informational properties are concerned. The findings of our paper will further contribute towards such an interpretation.

In the zero-sum game case there are two players who share a common cost function. Player 1's goal is to minimize the expected cost, while Player 2's goal is to maximize it. The players are commonly referred to as the minimizer and the maximizer, respectively.  Given fixed $\mathbb{X}$, $\mathbb{Y}^1$, $\mathbb{Y}^2$, and $\zeta$ such that $x \sim \zeta$, a zero-sum game $g = (c, \mathbb{U}^1, \mathbb{U}^2)$ is a triple of a measurable cost function $c: \mathbb{X} \times \mathbb{U}^1 \times \mathbb{U}^2 \rightarrow \mathbb{R}$ and standard Borel action spaces for each player $\mathbb{U}^1, \mathbb{U}^2$. 

%

\begin{definition} Given an information structure $\mu$, we say that $\gamma^{1,*}, \gamma^{2,*}$ is an equilibrium for a zero-sum game $g$ if
\begin{eqnarray*}\inf_{\gamma^1 \in \Gamma^1} J(g, \mu,\gamma^{1},\gamma^{2,*}) = J(g, \mu,\gamma^{1,*},\gamma^{2,*}) = \sup_{\gamma^2 \in \Gamma^2}  J(g, \mu,\gamma^{1,*},\gamma^2). \nonumber\end{eqnarray*}
\end{definition}

Let $J^{*}(g, \mu)$ be $J(g, \mu, \gamma^{1}, \gamma^{2})$ where $(\gamma^{1}, \gamma^{2})$ are chosen to be the equilibrium strategies for the players. In the standard Borel setup, sufficient conditions can be found for the existence of Nash-equilibrium in zero-sum games.  \cite[Theorem 3.1]{balder1988generalized} (see also \cite[Theorem 3.1]{HBY2020arXiv}) presents an existence result under an absolute continuity condition on information structures with respect to a reference measure. We also refer the reader to \cite{mamer1986zero} which presents complementary conditions where the absolute continuity condition is relaxed. Finally, \cite[Theorem 3.4]{balder1988generalized} presents a generalization where action sets are information dependent. The essence of these results were presented in \cite[Theorem 3.2]{HBY2020arXiv}.

Among contributions, \cite{EinyDifferential} shows uniform continuity of the value function for zero-sum games under convergence of player information fields using a notion of convergence of sigma fields developed by Boylan in \cite{boylan}, under Lipschitz continuity assumptions on the cost function; while \cite{EinyContinuity} establishes continuity of the value function in the prior distribution on the state space for zero-sum games using the total variation metric. In \cite{pkeski2019value}, for countable $\mathbb{X}, \mathbb{Y}^1, \mathbb{Y}^2$, a \textit{value-distance} is introduced, which is a method of comparison of information structures that can be characterized using the total variation distance in zero-sum games. 

\begin{definition}[\cite{pkeski2019value}]\label{valuedistance}
Take countable $\mathbb{X}, \mathbb{Y}^1, \mathbb{Y}^2$. Let $\tilde{\mathbb{G}}$ be all games with cost functions satisfying A1 (and where the cost is bounded by $1$) and countable action spaces $\mathbb{U}^1$ and $\mathbb{U}^2$. The value-distance between two information structures for zero-sum games is: $d_2(\mu,\nu) =: \sup_{g \in \tilde{\mathbb{G}}} |J^*(g, \mu) - J^*(g,\nu)|$.
\end{definition}

\begin{definition}\label{garbling2}
We denote by $\kappa^1 \mu$ the information structure in which Player $1$'s information from $\mu$ is \textit{garbled} by a stochastic kernel $\kappa^1$, and $\mu \kappa^2$ the information structure in which Player 2's information from $\mu$ is garbled by $\kappa^2$. Explicitly, this means the information structure becomes:
\begin{align}
    (\kappa^{1}\mu)(B,dy^{2}, dx)  &= \int_{\mathbb{Y}^1}\kappa^{1}(B|y^1)\mu(dy^1,dy^{2},dx), \: B \in \mathcal{B}(\mathbb{Y}^1), \nonumber \\
     (\mu\kappa^2)(dy^1,B, dx)  &= \int_{\mathbb{Y}^2}\kappa^{2}(B|y^2) \mu(dy^1,dy^{2},dx), \: B \in \mathcal{B}(\mathbb{Y}^2). \nonumber
\end{align}
\end{definition}
We use $K^i$ to denote the space of all such stochastic kernels $\kappa^i$ for player $i$.

\begin{theorem}[Theorem 1, \cite{pkeski2019value}]\label{peskioriginal}
Take countable $\mathbb{X}, \mathbb{Y}^1, \mathbb{Y}^2$. The following equality holds:
\begin{align*}
   d_2(\mu, \nu) &= \max\{\min_{\kappa^1 \in K^1,\kappa^2\in K^2}\|\kappa^1\mu - \nu\kappa^2\|_{TV}, \min_{\kappa^1 \in K^1,\kappa^2 \in K^2}\|\mu\kappa^2 - \kappa^1\nu\|_{TV}\}. \nonumber
\end{align*}
\end{theorem}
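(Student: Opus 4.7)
The argument splits into an upper bound $d_2(\mu,\nu) \leq \max\{\cdot,\cdot\}$, which is routine, and the matching lower bound, which carries the main difficulty.

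For the upper bound, the plan is to combine two standard ingredients. First, Blackwell monotonicity for zero-sum games: garbling the minimizer's information only hurts her, so $J^*(g,\kappa^1\mu) \geq J^*(g,\mu)$, and analogously $J^*(g,\nu\kappa^2) \leq J^*(g,\nu)$. Each inequality holds because any policy on the garbled information structure can be simulated as a behavioral policy on the original by first applying the garbling kernel. Second, since $\|c\|_\infty \leq 1$, the dual characterization of total variation yields the Lipschitz estimate $|J^*(g,\mu_1)-J^*(g,\mu_2)| \leq \|\mu_1-\mu_2\|_{TV}$, obtained by passing the pointwise bound $|J(g,\mu_1,\gamma^1,\gamma^2)-J(g,\mu_2,\gamma^1,\gamma^2)| \leq \|\mu_1-\mu_2\|_{TV}$ through $\inf_{\gamma^1}\sup_{\gamma^2}$. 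Chaining these, for any $\kappa^1 \in K^1, \kappa^2 \in K^2$:
\[
J^*(g,\mu) - J^*(g,\nu) \;\leq\; J^*(g,\kappa^1\mu) - J^*(g,\nu\kappa^2) \;\leq\; \|\kappa^1\mu - \nu\kappa^2\|_{TV}.
\]
Minimizing over $(\kappa^1,\kappa^2)$ controls $J^*(g,\mu)-J^*(g,\nu)$ by the first term in the $\max$; swapping the roles of $\mu$ and $\nu$ in the chain (garbling Player~1 of $\nu$ and Player~2 of $\mu$) controls $J^*(g,\nu)-J^*(g,\mu)$ by the second; then taking $\sup_{g \in \tilde{\mathbb{G}}}$ finishes the upper bound.

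For the lower bound, the plan is to dualize the total variation and interchange $\min$ and $\sup$ via Sion's minimax theorem:
\[
\min_{\kappa^1,\kappa^2}\|\kappa^1\mu - \nu\kappa^2\|_{TV} \;=\; \sup_{\|f\|_\infty \leq 1}\left[\min_{\kappa^1}\int f\,d(\kappa^1\mu) \;-\; \max_{\kappa^2}\int f\,d(\nu\kappa^2)\right],
\]
the interchange being legitimate by convexity of $K^1,K^2$ and of the unit ball, bilinearity (in $\kappa^i$) and linearity (in $f$) of the integrand, and compactness of these sets in the relevant product topologies, all supplied by the countability of $\mathbb{X},\mathbb{Y}^1,\mathbb{Y}^2$. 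For each $\epsilon$-optimal $f$, I would then construct a game $g^{(f)} \in \tilde{\mathbb{G}}$ with action spaces $\mathbb{U}^i = \mathbb{Y}^i$, so that behavioral strategies of Player $i$ correspond precisely to garblings $\kappa^i$, and with a cost crafted from $f$ so that $J^*(g^{(f)},\mu) - J^*(g^{(f)},\nu)$ approaches the bracketed quantity. Taking $\sup$ over $f$ then yields the first term of the $\max$; symmetry handles the second.

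The main obstacle is the construction of $g^{(f)}$ in the lower bound. The naive choice $c = f$ does not work directly: one finds $J^*(g^{(f)},\mu) = \inf_{\kappa^1}\sup_{\kappa^2}\int f\,d(\kappa^1\mu\kappa^2)$, involving both players' garblings, whereas the dual expression expects the $\mu$-value to depend on a Player~1 garbling alone and the $\nu$-value on a Player~2 garbling alone. A successful cost must force Player~2's best response to act as the identity on the $\mu$ side and Player~1's best response to act as the identity on the $\nu$ side, presumably via additional terms that penalize non-identity garblings asymmetrically across the two information structures. The countability hypothesis on $\mathbb{X},\mathbb{Y}^1,\mathbb{Y}^2$ is also essential for the compactness behind Sion's theorem and for the existence of the equilibrium value $J^*$ in the constructed game.
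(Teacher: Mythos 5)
This theorem is imported from the literature (it is Theorem 1 of \cite{pkeski2019value}); the present paper states it without proof, so there is no in-paper argument to compare your attempt against. Judged on its own, your proposal fully establishes only the easy inequality $d_2(\mu,\nu) \leq \max\{\cdot,\cdot\}$: Blackwell monotonicity for each player separately, combined with the Lipschitz bound $|J^*(g,\mu_1)-J^*(g,\mu_2)| \leq \|\mu_1-\mu_2\|_{TV}$ (valid with the normalization (\ref{TValternative}) since $\|c\|_\infty \leq 1$), gives exactly the chain you wrote, and minimizing over $(\kappa^1,\kappa^2)$ and symmetrizing is correct.

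The lower bound, which is the actual content of the theorem, is not proved. You correctly diagnose that the naive choice $c=f$ fails --- the induced value $\inf_{\kappa^1}\sup_{\kappa^2}\int f\,d(\kappa^1\mu\kappa^2)$ entangles both players' garblings, whereas the dual expression needs $\min_{\kappa^1}\int f\,d(\kappa^1\mu)$ and $\max_{\kappa^2}\int f\,d(\nu\kappa^2)$ separately --- but the sentence ``a successful cost must force Player 2's best response to act as the identity on the $\mu$ side\dots presumably via additional terms'' is precisely the missing construction, and it is where essentially all of the difficulty of the theorem resides. A plan that names the obstacle without exhibiting the game $g^{(f)}$ does not constitute a proof of the hard direction. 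There is also a secondary gap in the duality step: for countably infinite $\mathbb{Y}^i$ the set $K^i$ of garbling kernels is \emph{not} compact (mass can escape to infinity, e.g.\ $\delta_{y_n}$ with $y_n \to \infty$ has no subsequential limit in $\mathcal{P}(\mathbb{Y}^i)$), so ``countability supplies compactness'' is not accurate, and both the attainment of the $\min$ in the statement and the Sion interchange require a genuinely different justification than the one you sketch.
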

Via this distance, it follows that convergence of $\{\mu_n\}$ to $\mu$ in total variation results in convergence of $J^*(g, \mu_n)$ to $J^*(g,\mu)$ when the state, measurement and action spaces are countable. 
In our paper we will consider general standard Borel spaces and demonstrate that if $\{\mu_n\}$ converges to $\mu$ in total variation, then $J^*(g, \mu_n)$ converges to $J^*(g,\mu)$ for all games $g$ for which Nash-equilbria exist under $\{\mu_n\}$ and $\mu$. Furthermore, we will establish upper semi-continuity properties that $J^*(g, \cdot)$ exhibits under weak and setwise convergence of measurement channels for each player. 

We also note that Theorem \ref{peskioriginal} is closely related in the single-player case to a result from Le Cam in \cite{LeCamSufficiency}. We recall the definition of the Le Cam distance for a single-player decision problem with information structures $\mu$ and $\nu$.

\begin{definition}
The Le Cam distance between two information structures $\mu$ and $\nu$ is
\begin{align}
    \Delta(\mu, \nu) &:= \max\{\delta(\mu,\nu), \delta(\nu,\mu)\} \nonumber \\ &:= \max\{\inf_{\kappa \in K}\sup_{x \in \mathbb{X}}\|\kappa \mu (\cdot|x) - \nu(\cdot|x)\|_{TV}, \inf_{\kappa \in K}\sup_{x \in \mathbb{X}}\|\mu(\cdot|x) - \kappa \nu(\cdot|x)\|_{TV}\}. \nonumber
\end{align}
\end{definition}
$\delta(\mu,\nu)$ is referred to as the \textit{Le Cam deficiency of $\mu$ with respect to $\nu$}. We now recall the following theorem adapted from Le Cam \cite{LeCamSufficiency}, which holds in the standard Borel setup. 

\begin{theorem}[Theorem 3, \cite{LeCamSufficiency}]\label{LeCam}
For a given $\epsilon > 0$, if $\delta(\mu, \nu) \leq \epsilon$ then for any policy $\gamma_{\nu}$ under $\nu$ and any single-player game with a bounded cost function $\|c\|_{\infty} \leq 1$, there exists a policy $\bar{\gamma}_{\mu}$ under $\mu$ such that
\begin{equation}
     E^{\mu}_{\zeta}[c(x, \bar{\gamma}_{\mu}(y))]\leq E^{\nu}_{\zeta}[c(x, \gamma_{\nu}(y))] + \epsilon. \nonumber
\end{equation}
\end{theorem}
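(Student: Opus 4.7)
The plan is to use a garbling kernel $\kappa$ that nearly achieves the infimum in $\delta(\mu,\nu)$ to simulate an observation under $\nu$ starting from one under $\mu$, then apply the given policy $\gamma_\nu$ to the simulated observation. The resulting randomized rule for $\mu$ will have expected cost within $\epsilon$ of $E^\nu_\zeta[c(x,\gamma_\nu(y))]$ because the TV gap between the garbled-$\mu$ and $\nu$ conditionals on $y$ given $x$ is at most $\epsilon$ uniformly in $x$; a standard Fubini derandomization then produces a deterministic measurable $\bar\gamma_\mu$.

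Concretely, for an arbitrary $\delta'>0$ choose a Markov kernel $\kappa$ on $\mathbb{Y}$ with $\sup_{x\in\mathbb{X}}\|\kappa\mu(\cdot|x)-\nu(\cdot|x)\|_{TV}\leq \epsilon+\delta'$. Using the stochastic realization result referenced earlier in the paper, express the garbling as $\tilde y := T(y,V)$ for a Borel $T\colon \mathbb{Y}\times[0,1]\to \mathbb{Y}$ and a uniform $V$ on $[0,1]$ independent of $(x,y)$; then the conditional law of $\tilde y$ given $x$ equals $(\kappa\mu)(\cdot|x)$ exactly. Setting $u := \gamma_\nu(\tilde y)$ and using Fubini,
\[
E^{\mu\otimes V}[c(x,\gamma_\nu(\tilde y))]-E^\nu_\zeta[c(x,\gamma_\nu(y))] = \int\zeta(dx)\int c(x,\gamma_\nu(\tilde y))\bigl[(\kappa\mu)(d\tilde y|x)-\nu(d\tilde y|x)\bigr];
\]
since $\|c\|_\infty\leq 1$, the dual representation of total variation in (\ref{TValternative}) bounds the inner integral by $\|\kappa\mu(\cdot|x)-\nu(\cdot|x)\|_{TV}\leq \epsilon+\delta'$ pointwise in $x$, yielding the cost bound $E^\nu_\zeta[c(x,\gamma_\nu(y))]+\epsilon+\delta'$. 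To derandomize, the map $\Phi(v) := E^\mu_\zeta[c(x,\gamma_\nu(T(y,v)))]$ is Borel with $\int_0^1 \Phi(v)\,dv$ equal to the randomized cost, so some $v^\ast\in[0,1]$ achieves $\Phi(v^\ast)$ at most this average, and $\bar\gamma_\mu(y):=\gamma_\nu(T(y,v^\ast))$ is the desired deterministic policy.

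The delicate point, and the one I would spend the most care on, is the gap between the hypothesis $\delta(\mu,\nu)\leq \epsilon$ and the $\delta'$ excess in the TV bound: the infimum in the deficiency is not a priori attained. This is handled either by a tightness/compactness argument producing an optimal $\kappa$ in the standard Borel setting, or by reading the conclusion as asserting, for each $\epsilon'>\delta(\mu,\nu)$, a policy that achieves the bound with $\epsilon'$ in place of $\epsilon$, and letting $\epsilon'\downarrow\delta(\mu,\nu)$.
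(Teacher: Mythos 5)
The paper does not actually prove this statement: it is recalled from Le Cam with a citation ("adapted from \cite{LeCamSufficiency}"), so there is no in-paper argument to compare against. Your proof is the standard randomization-criterion argument and is essentially sound: choosing a near-optimal garbling kernel $\kappa$, realizing it as $\tilde y = T(y,V)$ with $V$ uniform and independent (so that the law of $\tilde y$ given $x$ is exactly $(\kappa\mu)(\cdot|x)$), bounding the cost gap pointwise in $x$ by the dual representation in (\ref{TValternative}) (note that with the paper's convention $\|\alpha-\beta\|_{TV}=\sup_{\|f\|_\infty\le 1}|\int f\,d\alpha-\int f\,d\beta|$, the bound $\epsilon+\delta'$ needs no extra factor of $2$ since $\|c\|_\infty\le 1$), and derandomizing via Fubini to extract a deterministic measurable $\bar\gamma_\mu(y)=\gamma_\nu(T(y,v^*))$ are all correct steps. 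The one genuine issue is the one you yourself flag: since the infimum defining $\delta(\mu,\nu)$ need not be attained by a Markov kernel, your construction yields, for each $\delta'>0$, a policy achieving $E^{\nu}_{\zeta}[c]+\epsilon+\delta'$, equivalently $\inf_{\bar\gamma_\mu}E^{\mu}_{\zeta}[c]\le E^{\nu}_{\zeta}[c]+\epsilon$, which is marginally weaker than the stated existence of a single policy achieving $+\epsilon$. Closing that gap requires the compactness step from Le Cam's theory (weak-$*$ compactness of transitions between $L$-spaces, plus their realizability as Markov kernels in the dominated standard Borel setting), which you mention but do not carry out; the $\epsilon+\delta'$ form you do establish is, however, sufficient for the only use the paper makes of this theorem, namely the bound $d_1(\mu,\nu)\le\Delta(\mu,\nu)$ in the subsequent remark, which involves only infima of values.
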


\begin{remark} We observe that the ``value-distance'' in Definition \ref{valuedistance} is similar to the Le Cam distance, with the difference being that the Le Cam distance looks at maximizing the total variation distance over individual states, rather than incorporating the prior. Theorem \ref{LeCam} is also closely related to Theorem \ref{peskioriginal} in the single-player case. Theorem \ref{LeCam} implies that if $\delta(\mu,\nu) = \epsilon$ then the maximum possible decrease in value over all valid games when changing from $\mu$ to $\nu$ is upper-bounded by $\epsilon$. Similarly, if $\delta(\nu,\mu) = \alpha$ then the maximum possible increase in value going from $\mu$ to $\nu$ is upper-bounded by $\alpha$. 
If we define $d_1$ similar to Definition \ref{valuedistance}, as the supremum of the absolute difference in value over all valid single-player games, as a natural restriction of $d_2$ to the single-player case, we would have that $d_1(\mu, \nu) \leq \max\{\delta(\mu,\mu),\delta(\nu,\mu)\}$, and so $d_1(\mu,\nu) \leq \Delta(\mu,\nu)$. Thus, the Le Cam distance gives an upper-bound on the value-distance. 
\end{remark}


\subsubsection{Stochastic Team Problems}
The induced cost under a collection of policies $\gamma^1,\cdots, \gamma^n$ for a team problem is given by
\[J(c, \mu, \bar{\gamma}) =: \int c(x, \gamma^1(y^1), \dots, \gamma^n(y^n))\mu(dx, dy^1, \dots, dy^n)\]

A game for a team-problem is an $n+1$-tuple consisting of a common cost function $c: \mathbb{X} \times \mathbb{U}^1 \times \dots \times \mathbb{U}^n \rightarrow \mathbb{R}$ and standard Borel action spaces $\mathbb{U}^i$ for each player $i \in \{1,\dots,n\}$. 

In team problems, equilibrium solutions are team policies $\bar{\gamma}^* := (\gamma^{1,*}, \dots, \gamma^{n,*})$ which minimize the value function. The value function at equilibrium is denoted by $J^*(c, \mu)$. General existence results for team-optimal policies can be found in \cite[Section 5]{YukselWitsenStandardArXiv}. 


\subsubsection{General Stochastic Games} The value of information may not be well-posed since unlike team and zero-sum game problems, one cannot talk about the unique value of a general non-zero-sum game. Furthermore, more information may have a positive or negative value to a player who receives it, unlike in zero-sum games and team problems. An example of such an occurrence can be found in \cite{Bassan2003}. The electronic mail game from \cite{ElectronicMail} is a further relevant classic example where equilibria do not converge (although the information does not converge in total variation in this example). We note that a somewhat different notion of $\epsilon$-equilibria (a uniform-$\epsilon$ equilibrium concept where $\epsilon$-proximity, with $\epsilon$ being uniform for each conditional expected cost/reward given the private information realization, regardless of how unlikely the realizations are) has been studied in \cite{kajii1998payoff} and \cite{rothschild2005payoff} for finite games, where it has been shown that total variation continuity does not hold under this concept.

In this paper, we will answer the question of whether convergence of information structures leads to convergence of the value function (assuming it is well-defined) for a player in a general non-zero-sum game. 

\subsection{Contributions of the paper}
In this paper, we make the following contributions. 
\begin{itemize}
\item[(i)] We show that the value function for a zero-sum game is continuous in total variation convergence of the information structure if the game has a bounded measurable cost function. In addition, when the prior is fixed, the value function is either upper or lower semicontinuous in setwise convergence of an information structure, if the sequence of information structures is a minimizer-garbling or maximizer-garbling sequence. The same results hold for weak convergence of the information structures, when the assumptions on the cost function are such that it is continuous and bounded, and the player action spaces are convex. If the channels are fixed and continuous in total variation, continuity under weak convergence of the prior holds. 
\item[(ii)] We show that the value function for team problems is continuous in total variation convergence of the information structure for bounded cost functions, upper semicontinuous in setwise convergence for measurable and bounded cost functions, and upper semicontinuous in weak convergence for games with continuous and bounded cost functions and convex action spaces. If the channels are fixed and continuous in total variation, continuity under weak convergence of the prior holds. 
\item[(iii)] A counterexample reveals that players in general non-zero-sum games may not have value functions that are continuous in total variation convergence of information structures, even for bounded cost functions. 
\end{itemize}

\section{Zero-Sum Games}\label{ZeroSumGames}

We have the following theorem.

%

\begin{theorem}\label{continuity}
\begin{itemize}
\item[(i)] For a fixed zero-sum game which satisfies Assumptions A1, A3, A4 and A5, the value function is continuous in information structures under total variation convergence of information structures, in the sense that if $\|\mu_n - \mu\|_{TV} \rightarrow 0$, then $|J^*(g,\mu_n) - J^*(g,\mu)| \rightarrow 0$ for all $\mu_n$ satisfying A3. 

\item[(ii)] The value function is not necessarily continuous under weak or setwise convergence of information structures even under Assumptions A2 and A4 and with a fixed prior. 
\end{itemize}
\end{theorem}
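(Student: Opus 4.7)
The plan for part (i) is to exploit the dual characterization of total variation distance, $\|\mu_n - \mu\|_{TV} = \sup_{\|f\|_\infty \leq 1}|\int f\,d\mu_n - \int f\,d\mu|$. For any fixed pair of policies $(\gamma^1, \gamma^2) \in \Gamma^1 \times \Gamma^2$, the integrand $(x, y^1, y^2) \mapsto c(x, \gamma^1(y^1), \gamma^2(y^2))$ is measurable and bounded by $\|c\|_\infty$ under A1, which yields
\[
|J(g, \mu_n, \gamma^1, \gamma^2) - J(g, \mu, \gamma^1, \gamma^2)| \leq \|c\|_\infty\,\|\mu_n - \mu\|_{TV},
\]
uniformly over all policies. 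Since the $\inf$ over $\gamma^1$ and $\sup$ over $\gamma^2$ (in either order) is nonexpansive under uniform perturbations of the integrand, both the upper value $\inf_{\gamma^1}\sup_{\gamma^2}J$ and the lower value $\sup_{\gamma^2}\inf_{\gamma^1}J$ converge to their respective limits under $\mu$ at rate $\|c\|_\infty\,\|\mu_n - \mu\|_{TV}$.

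To lift convergence of the upper and lower values to convergence of $J^*$ itself, I would invoke existence of a Nash equilibrium (so that upper value equals lower value equals $J^*$) under both $\mu_n$ and $\mu$, which follows from A1, A3, A4, A5 via \cite[Theorem 3.1]{HBY2020arXiv}. Note that these assumptions concern only the game structure $g$ rather than the particular information structure, so they transfer to every $\mu_n$ automatically without needing to check any extra uniform condition.

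For part (ii), the plan is to construct an explicit counterexample with fixed prior satisfying A2 and A4. The natural approach is to adapt a single-decision-maker counterexample from \cite{YukselOptimizationofChannels} showing failure of continuity of the value under weak or setwise convergence of the observation channel, and embed it into a zero-sum framework, for example by taking a trivial (singleton) maximizer action space, or by making $c$ independent of $u^2$, while retaining continuity of $c$ and compactness of both action spaces. Since setwise convergence implies weak convergence, a single setwise-converging sequence $\mu_n \to \mu$ with $J^*(g, \mu_n) \not\to J^*(g, \mu)$ will witness both failures simultaneously.

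The main obstacle lies in part (ii): one must exhibit a specific sequence $\mu_n$ that converges in the desired weak/setwise sense (and hence, by part (i), necessarily fails to converge in total variation) while producing a strict jump in the value function in the limit. Part (i) is essentially a Lipschitz-in-total-variation observation composed with nonexpansiveness of $\inf$ and $\sup$, combined with the cited equilibrium existence result; no deep technical difficulty arises there beyond confirming that the hypotheses of that theorem apply to the game $g$ (and therefore to every admissible information structure).
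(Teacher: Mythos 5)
Your proposal is correct and follows essentially the same route as the paper: part (i) is the uniform bound $|J(g,\mu,\gamma)-J(g,\nu,\gamma)|\leq \|c\|_\infty\|\mu-\nu\|_{TV}$ combined with equilibrium existence from A3--A5 (the paper phrases the nonexpansiveness step by directly perturbing the equilibrium strategies $\gamma^{1,*}_\nu,\gamma^{2,*}_\mu$ via the saddle-point inequalities, which is the same content as your inf--sup nonexpansiveness), and part (ii) is exactly the paper's construction of embedding the single-decision-maker counterexamples of \cite{YukselOptimizationofChannels} into a zero-sum game with a trivial second player.
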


Before we state the proof, if existence of equilibria is assumed apriori, then A3 and A5 can be relaxed for part (i).

\begin{proof}
(i)
By \cite{HBY2020arXiv}, A3, A4 and A5 guarantee that an equilibrium exists under $\mu$ and $\nu$. Assume that $\mu$ and $\nu$ are such that $\|\mu - \nu\|_{TV} \leq \epsilon$ for some $\epsilon >0$. Note that if $\mu_n$ satisfies A3 and $\mu_n \to \mu$ in total variation, then $\mu$ also has to satisfy A3 (to see this, let $\mu_n \ll \kappa$ for each $n$ implying that $\kappa(B) = 0 \implies \mu_n(B)=0$. If there were to exist $B$ with $\mu(B) >0 $ but with $\kappa(B)=0$, then this would imply that $\mu_n(B)=0$ for all $n$ but not for the limit measure $\mu(B)$. This would violate total variation convergence).

Without loss of generality, assume that $J^*(g,\mu) - J^*(g, \nu) \geq 0$ for some game $g$ (a symmetric argument can be applied in the case where $J^*(g,\mu) - J^*(g, \nu) \leq 0$).
Then we have the following:
\begin{align*}
&J^*(g,\mu) - J^*(g, \nu) \\
& = \int c(x, \gamma^{1,*}_{\mu}(y^1), \gamma^{2,*}_{\mu}(y^2))\mu(dx, dy^1, dy^2) - \int c(x, {\gamma}^{1,*}_{\nu}(y^1), {\gamma}^{2,*}_{\nu}(y^2))\nu(dx, dy^1, dy^2)  \\
&\leq \int c(x, {\gamma}^{1,*}_{\nu}(y^1), \gamma^{2,*}_{\mu}(y^2))\mu(dx, dy^1, dy^2) - \int c(x, {\gamma}^{1,*}_{\nu}(y^1), {\gamma}^{2,*}_{\mu}(y^2))\nu(dx, dy^1, dy^2).
\end{align*}
The inequality comes from perturbing the minimizer's equilibrium strategy in the first term, making the expected cost larger, and perturbing the maximizer's equilibrium strategy in the second term, making the expected cost smaller. Then, viewing ${\gamma}^{1,*}_{\nu}$ and $\gamma^{2,*}_{\mu}$ as fixed, we can view $c(\cdot, {\gamma}^{1,*}_{\nu}(\cdot), \gamma^{2,*}_{\mu}(\cdot))$ as a measurable and bounded function from $\mathbb{X} \times \mathbb{Y}^1 \times \mathbb{Y}^2 \rightarrow \mathbb{R}$. Let $M \in \mathbb{R}_{\geq 0}$ be such that $\|c\|_\infty \leq M$. We have:
\begin{align*}
    &\int c(x, {\gamma}^{1,*}_{\nu}(y^1), \gamma^{2,*}_{\mu}(y^2))\mu(dx, dy^1, dy^2) - \int c(x, {\gamma}^{1,*}_{\nu}(y^1), {\gamma}^{2,*}_{\mu}(y^2))\nu(dx, dy^1, dy^2) \\
    &\leq M\|\mu - \nu\|_{TV} \leq M\epsilon.
\end{align*}
Thus, if $\|\mu_n - \mu\|_{TV} \rightarrow 0$, then $|J^*(g,\mu_n) - J^*(g,\mu)| \rightarrow 0$. 

(ii) 
For stochastic control problems (which can be viewed as single-player games), \cite[Sections 3.1.1. and 3.2.1]{YukselOptimizationofChannels} establish that value functions are not necessarily continuous under weak or setwise convergence. These counterexamples directly extend to the zero-sum game case: one can extend such a single-player game to a two-player zero-sum game where the action of one player has no impact on the cost: e.g. $|\mathbb{U}^2|=1$ so that Player 2 has no freedom to select a control action and the information structure of Player 1 changes. 
\end{proof}\\

We now study weak and setwise convergences of information structures. 


\begin{definition}
For a fixed prior $\zeta$, a sequence of information structures $\{\mu\}_{k = 1}^{\infty}$ is a \textit{maximizer-garbling} (or a minimizer-degarbling) sequence if for every $j \in \{1, \dots, \infty\}$ there exist kernels $\kappa^1_j$ and $\kappa^2_j$ such that $\mu_{j + 1}\kappa^2_j = \kappa^1_j \mu_j $.
\end{definition}

\begin{definition}
For a fixed prior $\zeta$, a sequence of information structures $\{\mu\}_{k = 1}^{\infty}$ is a \textit{minimizer-garbling} (or a maximizer-degarbling) sequence if for every $j \in \{1, \dots, \infty\}$ there exist kernels $\kappa^1_j$ and $\kappa^2_j$ such that $\kappa^1_j\mu_{j + 1} = \mu_j \kappa^2_j $.
\end{definition}

We note that, following the results in \cite{pkeski2008comparison} and \cite{HBY2020arXiv}, the value function for any zero-sum game will be monotonically decreasing for a maximizer-garbling sequence and monotonically increasing for a minimizer-garbling sequence. 

\begin{example}[Maximizer-Garbling Sequences]
Consider two-players in a zero-sum game with state space $\mathbb{X} = [0,1]$ endowed with the uniform distribution as $\zeta$. Let $\mathbb{Y}^1 = \mathbb{Y}^2 = \mathbb{R}$. 
\begin{itemize}
\item[(i)] For $m \in \mathbb{Z}_{\geq 0}$, define $\mu_m$ by the following independent measurement channels for the players: Player 1 receives measurement $y^1$, which is drawn randomly from a Gaussian distribution with mean $x$ and variance $1 + \frac{1}{m}$; Player 2 receives measurement $y^2$ which is drawn randomly from a Gaussian distribution with mean $x$ and variance $1 - \frac{1}{m+1}$. Since Gaussian distributions with higher variances are garblings of Gaussian distributions with lower variances, this sequence is a maximizer-garbling sequence, since for any $i \in \mathbb{Z}_{\geq 0}$, the maximizer's channel under $\mu_{i+1}$ is a garbling of his channel under $\mu_i$, while the minimizer's channel under $\mu_{i}$ is a garbling of her channel under $\mu_{i+1}$. Furthermore, this sequence converges weakly to the information structure in which both players' channels have variance $1$. One can also construct such an example where we have $y^1=x+v^1_m$, and if the variance of $v^1_m$ decreases to zero, we will have weak convergence to a point distribution. 
\item[(ii)] Another example involves noiseless but quantized measurement channels: $y^1=Q_m^1(x)$, where $Q_m^1$ is a uniform quantization of $[0,1]$ into $2^m$ bins. In this case, the quantizers are garblings of one another (as they are successive refinements as $m$ increases) and the weak limit is the fully informative channel: $Q(dy|x)= \delta_{x}(dy)$.
\end{itemize}
\end{example}

We emphasize that not all information structures can be related to each other as either minimizer or maximizer-garbling sequences. 


\begin{theorem}\label{zerosumweak}
Let Assumptions A2, A3, A4 and A7 hold and the prior be fixed. Let $\mu_m$ be a sequence of information structures converging weakly to information structure $\mu$. If the sequence is a maximizer-garbling, then the value function is lower semicontinuous (in the sense that if $\mu_n \to \mu$ weakly and $\mu_n, \mu$ satisfy Assumption A3, then $\liminf_{n \to \infty} J^*(g,\mu_n) \geq J^*(g,\mu)$). If the sequence is a minimizer-garbling, then the value function is upper semicontinuous. 
\end{theorem}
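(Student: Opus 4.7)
The plan is to combine the monotonicity of $J^*(g,\mu_m)$ along the maximizer-garbling sequence, noted in the discussion preceding the theorem and following from the Blackwell-type comparisons in \cite{pkeski2008comparison,HBY2020arXiv}, with a limit-of-policies argument that uses the convexity and compactness of the action space at the weak limit. Monotonicity immediately gives that $J^*(g,\mu_m)$ converges to a limit $L$ with $\liminf_m J^*(g,\mu_m)=L$, so lower semicontinuity at $\mu$ reduces to showing $J^*(g,\mu)\leq L$. Since the prior $\zeta$ is fixed, the $\mathbb{X}$-marginals of $\mu_m$ coincide and thus trivially converge setwise, so by \cite[Theorem 3.10]{schal1975dynamic} the weak convergence $\mu_m\to\mu$ upgrades to $w$-$s$ convergence, allowing integrands that are measurable-bounded in $x$ and continuous-bounded in $(y^1,y^2)$ to pass to the limit.

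To produce a candidate minimizer policy under $\mu$, I would use A3 to invoke \cite{HBY2020arXiv} for existence of equilibrium strategies $(\gamma^{1,*}_m,\gamma^{2,*}_m)$ under each $\mu_m$, and then form the joint measures $\pi_m(dx,dy^1,dy^2,du^1)=\mu_m(dx,dy^1,dy^2)\delta_{\gamma^{1,*}_m(y^1)}(du^1)$ on $\mathbb{X}\times\mathbb{Y}^1\times\mathbb{Y}^2\times\mathbb{U}^1$. Tightness of $\{\pi_m\}$ follows from weak convergence of $\mu_m$ and compactness of $\mathbb{U}^1$ (A4); a subsequential weak limit $\pi$ has $(x,y^1,y^2)$-marginal $\mu$ and disintegrates as $\mu\otimes\tilde\gamma^{1,*}_\infty$ for a stochastic kernel $\tilde\gamma^{1,*}_\infty(du^1|y^1)$. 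Convexity of $\mathbb{U}^1\subset\mathbb{R}^k$ (A7) lets me take the barycenter of $\tilde\gamma^{1,*}_\infty$ as a deterministic policy $\gamma^{1,*}_\infty$ (or keep the relaxed form, using that for the minimizer the pure and mixed value coincide).

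The core estimate is then: for any continuous maximizer policy $\gamma^2$, by A2 and the joint $w$-$s$ convergence $\pi_m\to\pi$,
\[
\lim_m E^{\mu_m}\!\left[c(x,\gamma^{1,*}_m(y^1),\gamma^2(y^2))\right]=E^\mu\!\left[c(x,u^1,\gamma^2(y^2))\right]\,\pi\text{-a.s.},
\]
and the left-hand side is bounded by $\sup_{\gamma^{2'}} E^{\mu_m}[c(x,\gamma^{1,*}_m(y^1),\gamma^{2'}(y^2))]=J^*(g,\mu_m)$. Passing to $\limsup$ and using monotonicity yields $E^\mu[c(x,u^1,\gamma^2(y^2))]\leq L$ for each continuous $\gamma^2$; approximating arbitrary measurable $\gamma^2$ by continuous policies via Lusin's theorem together with A2 and A4, and taking the supremum, gives $\sup_{\gamma^2}E^\mu[c(x,u^1,\gamma^2(y^2))]\leq L$. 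Since this supremum upper-bounds $J^*(g,\mu)$, we conclude $J^*(g,\mu)\leq L$, proving lower semicontinuity. The minimizer-garbling case is entirely symmetric: extract a Young-measure limit of $\{\gamma^{2,*}_m\}$ instead and obtain $J^*(g,\mu)\geq L$, that is, upper semicontinuity.

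The hard part will be the joint extraction of $(\mu_m,\gamma^{1,*}_m)$ in a sense that respects both the $w$-$s$ topology on the measures and the Young-measure topology on the (measurable, possibly non-continuous) equilibrium policies, together with the identification of the disintegrated limit as a conditional of $u^1$ given $y^1$ alone (conditional independence from $x$). The convexity A7 is what rescues this: it ensures the relaxed limit can be folded back into a bona fide policy in $\mathbb{U}^1$, and it is precisely the point where the proof must diverge from the single-player weak-convergence arguments reviewed in the paper.
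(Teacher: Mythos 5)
Your overall scaffolding (monotonicity from the garbling structure reduces semicontinuity to a single inequality against the limit $L$; fixed prior upgrades weak to $w$-$s$ convergence) is sound, but the core step fails, and you have in fact named the failure point yourself without repairing it. The weak (sub)limit $\pi$ of $\pi_m=\mu_m\otimes\delta_{\gamma^{1,*}_m(y^1)}$ need not disintegrate as $\mu(dx,dy^1,dy^2)\,\tilde\gamma^{1,*}_\infty(du^1|y^1)$: the set of joint measures in which $u^1$ is conditionally independent of $(x,y^2)$ given $y^1$ is not closed under weak convergence when the underlying measures $\mu_m$ are themselves varying, since the oscillations of $\gamma^{1,*}_m$ can correlate with the varying fine structure of $\mu_m$ and produce a limit kernel that genuinely depends on $x$ and $y^2$. (With a \emph{fixed} $\mu$ the Young-measure limit does factor through $y^1$, but here only the $\mathbb{X}$-marginal is fixed.) If the limit kernel is not $y^1$-measurable, the quantity $\sup_{\gamma^2}E^{\pi}[c(x,u^1,\gamma^2(y^2))]$ does not upper-bound $J^*(g,\mu)=\inf_{\gamma^1\in\Gamma^1}\sup_{\gamma^2}$, because the limit object corresponds to an inadmissible (better-informed) minimizer and can achieve a strictly smaller sup; the final inequality $J^*(g,\mu)\le L$ then does not follow. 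Assumption A7 does not rescue this: convexity of $\mathbb{U}^1$ only licenses barycenter/Lusin manipulations, not the preservation of the information constraint in the limit. (A secondary issue: replacing the relaxed kernel by its barycenter controls the cost only if $c$ is convex in $u^1$, which is not assumed; the fallback to relaxed minimizer strategies changes the upper value unless a pure saddle point is separately invoked.)

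The paper's proof avoids taking limits of equilibrium policies altogether. It uses Lusin's theorem together with A7 to show that restricting either player to \emph{continuous} policies does not change the inf-sup value under any of the $\mu_m$ or under $\mu$; for a \emph{fixed} pair of continuous policies the cost integral converges by weak convergence and A2; and the garbling monotonicity is what justifies interchanging $\limsup_m$ with $\inf\sup$ over continuous policies. If you want to salvage your route, you would need a separate argument that the limiting strategic measure respects Player 1's information constraint, which is precisely the delicate closedness question that the paper's continuous-policy reduction is designed to sidestep.
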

\begin{proof}
Consider a minimizer-garbling sequence.  By A2, our cost function is bounded; let $M \in \mathbb{R}_{\geq 0}$ such that $\|c\|_{\infty} = M$. Let $\mu_{{\mathbb{Y}}^j}$ be the marginal of $\mu$ on its $(j + 1)$th component. Let $\gamma^j$ be an arbitrary policy for player $j$. We note that every Polish space is second countable (i.e. has a countable basis), since it is separable and metrizable. Thus, every standard Borel space is second countable. Then, by Lusin's theorem, using the fact that $\mathbb{U}^j$ is convex by assumption, we have that for any $\epsilon >0$, there exists a continuous function $f^j: \mathbb{Y}^j \rightarrow \mathbb{U}^j$ such that $\mu_{{\mathbb{Y}}^j}(\{y^j : f^j(y^j) \neq \gamma^j(y^j)\}) < \epsilon$.

Letting $B^2 := \{y^2 : f^2(y^2) \neq \gamma^2(y^2)\}$, we have:
\begin{align}
&\int_{\mathbb{X} \times \mathbb{Y}^1 \times \mathbb{Y}^2} c(x, \gamma^1(y^1), \gamma^{2}(y^{2}))\mu(dx, dy^1, dy^2) - M\epsilon \nonumber \\
& < \int_{\mathbb{X} \times \mathbb{Y}^1 \times \mathbb{Y}^2} c(x, \gamma^1(y^1), f^{2}(y^{2}))\mu(dx, dy^1, dy^2) \nonumber \\ 
& < \int_{\mathbb{X} \times \mathbb{Y}^1 \times \mathbb{Y}^2} c(x, \gamma^1(y^1), \gamma^{2}(y^{2}))\mu(dx, dy^1, dy^2) + M\epsilon  \label{zerosumineq}
\end{align}

This holds since the cost function is bounded by $[-M,M]$ and so the expected cost on $\mathbb{X} \times B^i \times \mathbb{Y}^{-i}$ is bounded by $[-M\epsilon, M\epsilon]$.
Denote by $\Gamma^j_C$ the policy space for player $j$ in which the player's policy is continuous from $\mathbb{Y}^j$ to $\mathbb{U}^j$. We note that if Player $1$'s strategy is fixed, Player $2$ receives the same cost if supremizing over policies in $\Gamma^2$ or $\Gamma^2_C$: that the value must be greater when $\Gamma^2$ is used is a consequence of $\Gamma^2_C \subset \Gamma^2$. That the value must be greater than or equal to that under $\Gamma^2$ when $\Gamma^2_C$ is used is a consequence of (\ref{zerosumineq}), since the policies, as well as $\epsilon$, were arbitrary. The same reasoning applies in the reverse case.

We now observe that, for any $m \in \mathbb{Z}_{>0}$:
\begin{align}
& \inf_{\gamma^1 \in \Gamma^1_C}\sup_{\gamma^2 \in \Gamma^2}\int c(x, \gamma^1(y^1), \gamma^{2}(y^{2}))\mu_m (dx,dy^1,dy^{2}) \nonumber \\
& = \inf_{\gamma^1 \in \Gamma^1}\sup_{\gamma^2 \in \Gamma^2}\int c(x, \gamma^1(y^1), \gamma^{2}(y^{2}))\mu_m (dx,dy^1,dy^{2}). \label{EquilEq}
\end{align}

To prove this, let $\gamma^{1,*}$ and $\gamma^{2,*}$ be the equilibrium policies under $\mu_m$. Then, following the discussion on Lusin's theorem, for any $\epsilon > 0$, there exists a continuous policy $\hat{\gamma}^{1} \in \Gamma^1_C$, which is equal to $\gamma^{1,*}$ except on a set $B^1$ such that $\mu_{m,{\mathbb{Y}^1}}(B^1) = \epsilon$. If Player 1 plays this strategy, then for any strategy that Player 2 selects in $\Gamma^2$, the value for the game will increase by at most $2M\epsilon$. This is because, if Player 2's arbitrary new strategy $\hat{\gamma}^2$ results in a change in performance on $A :=  \{\mathbb{X} \times B^1 \times \mathbb{Y}^2\}$, the maximum possible difference in expected cost on this set is $2M\epsilon$. If Player 2's new strategy results in better performance on $C := \{\mathbb{X} \times (\mathbb{Y}^1 \setminus B^1) \times \mathbb{Y}^2\}$, then the difference between playing $\hat{\gamma}^2$ and $\gamma^{2,*}$ is at most $2M\epsilon$, because $\hat{\gamma}^1$ is identical to $\gamma^{1,*}$ on this set, so a larger difference would necessarily contradict the fact that $\gamma^{2,*}$ is Player 2's best response to $\gamma^{1,*}$ under $\mu$ (since Player 2 could lose at most $2M\epsilon$ on set $A$ while employing this strategy against $\gamma^{1,*}$ instead of playing $\gamma^{2,*}$, and so by gaining more than $2M\epsilon$ on $C$, Player 2 would guarantee better expected performance playing $\hat{\gamma}^2$ against $\gamma^{1,*}$ over playing $\gamma^{2,*}$, violating the Nash-equilibrium condition). Thus, since Player 1 has a continuous strategy that guarantees a worst possible outcome of a $2M\epsilon$ increase in expected cost, we know that:
\begin{align*}
  & \inf_{\gamma^1 \in \Gamma^1_C}\sup_{\gamma^2 \in \Gamma^2}\int c(x, \gamma^1(y^1), \gamma^{2}(y^{2}))\mu_m (dx,dy^1,dy^{2}) \nonumber  \\
   & \leq \inf_{\gamma^1 \in \Gamma^1}\sup_{\gamma^2 \in \Gamma^2}\int c(x, \gamma^1(y^1), \gamma^{2}(y^{2}))\mu_m (dx,dy^1,dy^{2}) - 2M\epsilon \nonumber
\end{align*}

We also know that since $\Gamma^1_C \subset \Gamma^1$, we have:
\begin{align*}
  &    \inf_{\gamma^1 \in \Gamma^1_C}\sup_{\gamma^2 \in \Gamma^2}\int c(x, \gamma^1(y^1), \gamma^{2}(y^{2}))\mu_m (dx,dy^1,dy^{2})  \nonumber  \\
&   \geq \inf_{\gamma^1 \in \Gamma^1}\sup_{\gamma^2 \in \Gamma^2}\int c(x, \gamma^1(y^1), \gamma^{2}(y^{2}))\mu_m (dx,dy^1,dy^{2}) \nonumber.
\end{align*}

Since $\epsilon$ was arbitrary, (\ref{EquilEq}) follows. Applying a similar argument again, it follows that:
\begin{align}\label{ContinuousPolicies}
 & \inf_{\gamma^1 \in \Gamma^1_C}\sup_{\gamma^2 \in \Gamma^2_C}\int c(x, \gamma^1(y^1), \gamma^{2}(y^{2}))\mu_m (dx,dy^1,dy^{2}) \nonumber \\
& = \inf_{\gamma^1 \in \Gamma^1}\sup_{\gamma^2 \in \Gamma^2}\int c(x, \gamma^1(y^1), \gamma^{2}(y^{2}))\mu_m (dx,dy^1,dy^{2}).   
\end{align}

Now take arbitrary large $M \in \mathbb{Z}_{\geq 0}$. Then, we have:
 \begin{align}
& \inf_{\gamma^1 \in \Gamma^1} \sup_{\gamma^2 \in \Gamma^2}\int c(x, \gamma^1(y^1), \gamma^{2}(y^{2}))\mu_M(dx,dy^1,dy^{2}) \nonumber \\
 & = \inf_{\gamma^1 \in \Gamma^1_C} \sup_{\gamma^2 \in \Gamma^2_C}\int c(x, \gamma^1(y^1), \gamma^{2}(y^{2}))\mu_M(dx,dy^1,dy^{2}) \nonumber \\
 & \leq \inf_{\gamma^1 \in \Gamma^1_C}\sup_{\gamma^2 \in \Gamma^2_C} \limsup_{M \rightarrow \infty} \int c(x, \gamma^1(y^1), \gamma^{2}(y^{2}))\mu_M(dx,dy^1,dy^{2}) \label{inequality} \\
 & = \inf_{\gamma^1 \in \Gamma^1_C} \sup_{\gamma^2 \in \Gamma^2_C}\int c(x, \gamma^1(y^1), \gamma^{2}(y^{2}))\mu(dx,dy^1,dy^{2}) \nonumber \\
 & = \inf_{\gamma^1 \in \Gamma^1} \sup_{\gamma^2 \in \Gamma^2}\int c(x, \gamma^1(y^1), \gamma^{2}(y^{2}))\mu(dx,dy^1,dy^{2}) \nonumber
\end{align}

The first and final equalities follow from (\ref{ContinuousPolicies}). The inequality (\ref{inequality}) follows from the fact that the value function will be monotonically increasing in $M$: this is because the sequence of information structures is a minimizer-garbling sequence\footnote{It is achieving this inequality which requires the sequence of information structures to be a minimizer-garbling sequence, rather than an arbitrary weakly converging sequence of information structures.}. The second equality follows from the weak convergence of $\{\mu\}_m$ to $\mu$.
Since the above holds for all $M \in \mathbb{Z}_{\geq 0}$, it will also hold in the limit of $M$, and thus:
\begin{align*}
    &\limsup_{m \rightarrow \infty} \inf_{\gamma^1 \in \Gamma^1}\sup_{\gamma^2 \in \Gamma^2}\int c(x, \gamma^1(y^1), \gamma^{2}(y^{2}))\mu_m(dx,dy^1,dy^{2}) \nonumber \\
    &\leq \inf_{\gamma^1 \in \Gamma^1}\sup_{\gamma^2 \in \Gamma^2}\int c(x, \gamma^1(y^1), \gamma^{2}(y^{2}))\mu(dx,dy^1,dy^{2}). \nonumber
\end{align*}

Thus, the value function is upper semicontinuous. A similar proof applies for maximizer-garbling sequences.\end{proof}

We now consider setwise convergence of information structures; the proof follows closely that of Theorem \ref{zerosumweak}.

\begin{theorem}\label{zerosumsetwise}
Consider a fixed prior and let Assumptions A1, A3, A4 and A5 hold. Let $\mu_m$ be a sequence of information structures converging setwise to information structure $\mu$. If the sequence is a maximizer-garbling, then the value function is lower semicontinuous. If the sequence is a minimizer-garbling, then the value function is upper semicontinuous. 
\end{theorem}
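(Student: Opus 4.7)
My plan is to adapt the proof of Theorem \ref{zerosumweak}, exploiting the fact that setwise convergence is strictly stronger than weak convergence: it delivers $\int f\, d\mu_m \to \int f\, d\mu$ for every measurable bounded $f$, not merely continuous bounded $f$. This removes the need to approximate measurable policies by continuous ones, so the Lusin-theorem machinery that drove Theorem \ref{zerosumweak} drops out, and the hypotheses A2 and A7 used there can be replaced by A1 and A5 here.

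Under A3, A4, A5, the existence of Nash equilibria for each $\mu_m$ and for the limit $\mu$ follows from \cite[Theorem 3.1]{HBY2020arXiv}; write $V_m = J^*(g, \mu_m)$ and $V = J^*(g, \mu)$. Next, the Blackwell-type ordering for zero-sum games recorded in \cite{pkeski2008comparison,HBY2020arXiv} gives, for a minimizer-garbling sequence, $V_m \leq V_{m+1}$ (non-decreasing), and for a maximizer-garbling sequence the reverse monotonicity.

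Consider the minimizer-garbling case. For arbitrary measurable $\gamma^1 \in \Gamma^1$ and $\gamma^2 \in \Gamma^2$, A1 makes the integrand $c(x, \gamma^1(y^1), \gamma^2(y^2))$ measurable and bounded, so setwise convergence yields
\[
\int c(x,\gamma^1(y^1),\gamma^2(y^2))\,\mu_m(dx,dy^1,dy^2) \longrightarrow \int c(x,\gamma^1(y^1),\gamma^2(y^2))\,\mu(dx,dy^1,dy^2).
\]
Replicating the inequality chain of Theorem \ref{zerosumweak} without the continuous-policy reduction, I obtain, for an arbitrary fixed $M$,
\[
V_M = \inf_{\gamma^1\in\Gamma^1}\sup_{\gamma^2\in\Gamma^2}\int c\, d\mu_M \;\leq\; \inf_{\gamma^1}\sup_{\gamma^2}\limsup_{M'\to\infty}\int c\, d\mu_{M'} \;=\; \inf_{\gamma^1}\sup_{\gamma^2}\int c\, d\mu \;=\; V,
\]
where the first inequality is exactly the one used in Theorem \ref{zerosumweak}, justified by the monotonicity of $V_m$ in the minimizer-garbling direction, and the middle equality is setwise convergence applied to the bounded measurable integrand. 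Since this holds for every $M$, $\limsup_M V_M \leq V$, which is upper semicontinuity. The maximizer-garbling case is entirely symmetric: the monotonically non-increasing sequence $V_m$ and the reversed chain produce $V \leq \liminf_M V_M$, i.e., lower semicontinuity.

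The only delicate step, as in Theorem \ref{zerosumweak}, is moving $\limsup_{M'}$ past the inf-sup over policies in the displayed chain; this is underwritten by the garbling-induced monotonicity of the scalar sequence $V_m$, not by any pointwise monotonicity at the integrand level for individual fixed policy pairs. Everything else is strictly shorter than the weak-convergence version, because setwise convergence accommodates arbitrary measurable policies $\gamma^1, \gamma^2$ directly, so no invocation of Lusin's theorem and no convexity of the action spaces is needed.
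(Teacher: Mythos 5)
Your proof is correct and is essentially the paper's own argument: the paper omits the proof of Theorem \ref{zerosumsetwise}, stating only that it ``follows closely that of Theorem \ref{zerosumweak}, except for the use of Lusin's theorem,'' which is exactly what you carry out. Your key observation --- that setwise convergence applies directly to the bounded measurable integrand $c(x,\gamma^1(y^1),\gamma^2(y^2))$ for arbitrary measurable policies, so the continuous-policy reduction and hence Assumptions A2 and A7 are not needed --- is precisely the intended simplification, and you retain the garbling-induced monotonicity at the one step where it is genuinely required.
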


\begin{proof}
Consider a maximizer-garbling sequence. We have that:
\begin{align}
 & \liminf_{m \rightarrow \infty} \inf_{\gamma^1 \in \Gamma^1} \sup_{\gamma^2 \in \Gamma^2}\int c(x, \gamma^1(y^1), \gamma^2(y^2))\mu_m(dx,dy^1,dy^{2}) \nonumber \\
 & \geq \sup_{\gamma^2 \in \Gamma^2} \liminf_{m \rightarrow \infty}\inf_{\gamma^1 \in \Gamma^1}\int c(x, \gamma^1(y^1), \gamma^{2}(y^{2}))\mu_m (dx,dy^1,dy^{2}). \nonumber
\end{align}
 The inequality above follows from interchanging the limit inferior and the supremum. Following a similar reasoning as the proof of Theorem \ref{zerosumweak}, we have the following for any fixed policy $\gamma^2 \in \Gamma^2$:
\begin{align}
    &\liminf_{m \rightarrow \infty} \inf_{\gamma^1 \in \Gamma^1}\int c(x, \gamma^1(y^1), \gamma^{2}(y^{2}))\mu_m(dx,dy^1,dy^{2}) \nonumber \\
 & \quad   \geq \inf_{\gamma^1 \in \Gamma^1}\int c(x, \gamma^1(y^1), \gamma^{2}(y^{2}))\mu(dx,dy^1,dy^{2}). \nonumber
\end{align}
This gives us
\begin{align}
 & \liminf_{m \rightarrow \infty} \inf_{\gamma^1 \in \Gamma^1} \sup_{\gamma^2 \in \Gamma^2}\int c(x, \gamma^1(y^1), \gamma^2(y^2))\mu_m(dx,dy^1,dy^{2}) \nonumber \\
& \quad \geq \sup_{\gamma^2 \in \Gamma^2} \inf_{\gamma^1 \in \Gamma^1} \int c(x, \gamma^1(y^1), \gamma^{2}(y^{2}))\mu (dx,dy^1,dy^{2}). \nonumber
\end{align}
Thus, the value function is lower semicontinuous. A similar proof can show the result for minimizer-garbling sequences.  \end{proof}

We can also show that when the player channels are fixed, independent and total-variation continuous, the value function is continuous under weak convergence of the priors. This generalizes a single-player result from \cite{KYSICONPrior}. 

\begin{theorem}\label{ZSprior}
Under Assumptions A2, A4, and A6, the value function is continuous under weak convergence of the priors. 
\end{theorem}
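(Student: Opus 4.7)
The plan is to reduce the two-player continuity statement to a pair of invocations of the single-player continuity result \cite[Theorem 2.5]{KYSICONPrior}. Write $\mu_n = \zeta_n Q^1 Q^2$ and $\mu = \zeta Q^1 Q^2$ for the joint information structures induced by the fixed independent channels, where $\zeta_n \to \zeta$ weakly in $\mathcal{P}(\mathbb{X})$.

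First, I would introduce, for each measurable policy $\gamma^1 \colon \mathbb{Y}^1 \to \mathbb{U}^1$, the reduced cost
\[
c^{\gamma^1}(x, u^2) := \int_{\mathbb{Y}^1} c(x, \gamma^1(y^1), u^2)\, Q^1(dy^1|x),
\]
and verify that it is bounded and jointly continuous on $\mathbb{X} \times \mathbb{U}^2$. Boundedness is immediate from A2. For joint continuity at $(x, u^2)$ along $(x_k, u^2_k) \to (x, u^2)$, I would split the increment into three pieces: one handled by bounded convergence using continuity of $c$ in $u^2$; one bounded by $\sup_{u^1 \in \mathbb{U}^1} |c(x_k, u^1, u^2_k) - c(x, u^1, u^2_k)|$, which vanishes because joint continuity of $c$ (A2) together with compactness of $\mathbb{U}^1$ (A4) upgrades pointwise continuity in $x$ to uniform continuity over $u^1$ (a standard compactness-by-contradiction argument); and one bounded by $\|c\|_\infty \, \|Q^1(\cdot|x_k) - Q^1(\cdot|x)\|_{TV}$, which vanishes by A6.

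Next, for each $\epsilon > 0$ I would pick an $\epsilon$-optimal minimizer policy $\gamma^{1,\epsilon}$ under $\mu$, so that $\sup_{\gamma^2 \in \Gamma^2} J(g,\mu,\gamma^{1,\epsilon},\gamma^2) \le J^*(g,\mu) + \epsilon$. Then
\[
J^*(g,\mu_n) \;\le\; \sup_{\gamma^2} J(g,\mu_n,\gamma^{1,\epsilon},\gamma^2) \;=\; \sup_{\gamma^2} \int c^{\gamma^{1,\epsilon}}(x, \gamma^2(y^2))\, \zeta_n(dx)\, Q^2(dy^2|x),
\]
which is (up to sign) the optimal value of a single-player problem with prior $\zeta_n$, fixed channel $Q^2$ (TV-continuous by A6), compact action space $\mathbb{U}^2$ (A4), and continuous bounded cost $c^{\gamma^{1,\epsilon}}$ (by the previous step). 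Applying \cite[Theorem 2.5]{KYSICONPrior} to $-c^{\gamma^{1,\epsilon}}$, the right-hand side converges to $\sup_{\gamma^2} J(g,\mu,\gamma^{1,\epsilon},\gamma^2) \le J^*(g,\mu) + \epsilon$; taking $\limsup_n$ and then $\epsilon \downarrow 0$ yields $\limsup_n J^*(g,\mu_n) \le J^*(g,\mu)$. A symmetric argument with an $\epsilon$-optimal maximizer policy $\gamma^{2,\epsilon}$, reducing the minimizer's problem analogously via $c_{\gamma^{2,\epsilon}}(x, u^1) := \int c(x, u^1, \gamma^{2,\epsilon}(y^2))\, Q^2(dy^2|x)$ and using $J^*(g,\mu_n) \ge \inf_{\gamma^1} J(g,\mu_n,\gamma^1,\gamma^{2,\epsilon})$, then yields $\liminf_n J^*(g,\mu_n) \ge J^*(g,\mu)$.

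The main obstacle is the first step: upgrading joint continuity of $c$ on $\mathbb{X} \times \mathbb{U}^1 \times \mathbb{U}^2$ to joint continuity of the reduced cost $c^{\gamma^1}(x,u^2)$ despite $\gamma^1$ being merely measurable. The interplay is that A4 converts pointwise-in-$x$ continuity of $c$ into continuity uniform over $u^1$, which survives marginalization by $Q^1(\cdot|x)$, while A6 simultaneously controls the change of the integrating channel in total variation; without both, one cannot hope to feed a bounded continuous cost into the single-player theorem. Once this reduction is in place, the remainder of the proof is a clean pair of applications of \cite[Theorem 2.5]{KYSICONPrior} in the spirit of the minimax arguments already used in Theorem~\ref{zerosumweak}.
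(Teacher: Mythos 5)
Your proof is correct, and it takes a genuinely different (though closely related) route from the paper's. The paper does not invoke the single-player theorem as a black box; it instead repeats the strategy-swapping argument of Theorem~\ref{continuity} to bound $|J^*(g,Q^1\zeta Q^2)-J^*(g,Q^1\zeta_m Q^2)|$ by the maximum of two terms of the form $\int(\zeta-\zeta_m)(dx)\,f_m(x)$ with $f_m(x)=\int c(x,\gamma^{1,*}_m(y^1),\gamma^{2,*}(y^2))Q^1(dy^1|x)Q^2(dy^2|x)$, and then observes, following the \emph{proof} of \cite[Theorem 2.5]{KYSICONPrior}, that the family $\{f_m\}$ is uniformly bounded and equicontinuous (by A4 and A6), so that weak convergence of $\zeta_m$ is uniform over this family by \cite[Corollary 11.3.4]{dud02}. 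You instead freeze an $\epsilon$-optimal policy for one player, verify that the reduced cost $c^{\gamma^1}(x,u^2)$ is bounded and jointly continuous --- using exactly the same two ingredients, compactness of the action space to upgrade continuity in $x$ to uniformity over $u^1$, and total variation continuity of $Q^1$ for the channel increment --- and then apply the single-player theorem twice, once for the $\limsup$ and once for the $\liminf$. The analytic core is identical, but your packaging buys two small things: you use the cited theorem as a statement rather than re-running its equicontinuity argument, and you need only $\epsilon$-optimal responses together with a well-defined value $\inf\sup=\sup\inf$, rather than exact equilibrium policies $\gamma^{1,*}_m,\gamma^{2,*}$ under every $\mu_m$. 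The paper's version is shorter because the equicontinuity estimate is applied once, directly to the difference of values. Both arguments share the tacit assumption that the games under $\mu_m$ and $\mu$ have values; A2, A4, A6 alone do not guarantee this, so neither proof closes that gap.
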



\begin{proof}
Let $\zeta_m$ be a sequence of priors on $\mathbb{X}$ converging weakly to $\zeta$. Following the same reasoning as in Theorem \ref{continuity}, if for some $j$, $J^*(g, Q^1\zeta Q^2) - J^*(g, Q^1\zeta_j Q^2) \geq 0$, then we have:
\begin{align}
    & J^*(g, Q^1\zeta Q^2) - J^*(g, Q^1\zeta_j Q^2) \nonumber \\ 
    &\leq \int c(x, {\gamma}^{1,*}_{j}(y^1), \gamma^{2,*}(y^2))Q^1(dy^1|x)Q^2(dy^2|x)\zeta(dx) \nonumber \\
    & \qquad \qquad - \int c(x, {\gamma}^{1,*}_{j}(y^1), {\gamma}^{2,*}(y^2))Q^1(dy^1|x)Q^2(dy^2|x)\zeta_j(dx) \nonumber \\
     &= \int(\zeta - \zeta_j)(dx)\int c(x, {\gamma}^{1,*}_{j}(y^1), \gamma^{2,*}(y^2))Q^1(dy^1|x)Q^2(dy^2|x). \nonumber
\end{align}
Considering the case $J^*(g, Q^1\zeta Q^2) - J^*(g, Q^1\zeta_j Q^2) \leq 0$ also as above we get 
\begin{align}
    |J^*(g,&Q^1\zeta Q^2) - J^*(g, Q^1\zeta_m Q^2)| \nonumber \\ \leq &\max\{|\int(\zeta - \zeta_m)(dx)\int c(x, {\gamma}^{1,*}_{m}(y^1), \gamma^{2,*}(y^2))Q^1(dy^1|x)Q^2(dy^2|x)|,  \nonumber \\  & \quad \quad \quad \quad |\int(\zeta - \zeta_m)(dx)\int c(x, {\gamma}^{1,*}(y^1), \gamma^{2,*}_{m}(y^2))Q^1(dy^1|x)Q^2(dy^2|x)|\} \label{maxterms}.
\end{align}
Then, following the proof of \cite[Theorem 2.5]{KYSICONPrior}, since the channels are continuous in total variation, 
\[\int c(x, {\gamma}^{1,*}_{m}(y^1), \gamma^{2,*}(y^2))Q^1(dy^1|x)Q^2(dy^2|x),\] and 
\[\int c(x, {\gamma}^{1,*} y^1), \gamma^{2,*}_m(y^2))Q^1(dy^1|x)Q^2(dy^2|x),\] are equicontinuous families of functions. Using this fact and that $\zeta_m$ converges weakly to $\zeta$, by \cite[Corollary 11.3.4]{dud02} it follows that both terms in (\ref{maxterms}) converge to zero, and so $|J^*(g, Q^1\zeta Q^2) - J^*(g, Q^1\zeta Q^2)| \rightarrow 0$. 
\end{proof}


\section{Stochastic Teams}\label{TeamProblems}
Now we evaluate the regularity properties of the value function for team problems in information structures under total variation, weak, and setwise convergence. 
\begin{theorem}\label{teamtheorem}
Consider an $n$ decision-maker team and let Assumptions A1, A3, A4 and A5 hold for a fixed team problem. Then the value function is continuous under total variation convergence of information structures. 
\end{theorem}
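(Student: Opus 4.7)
The plan is to mirror the argument of Theorem \ref{continuity}(i), but using the key simplification that in the team setting both information structures use \emph{minimizing} optimal policies (there is no sup-inf to worry about), so we only need a single perturbation argument in each direction.

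First, I would invoke the existence of team-optimal policies. By \cite[Section 5]{YukselWitsenStandardArXiv}, Assumptions A3, A4, and A5 guarantee existence of team-optimal policies $\bar{\gamma}^*_\mu = (\gamma^{1,*}_\mu, \ldots, \gamma^{n,*}_\mu)$ under $\mu$ and $\bar{\gamma}^*_{\nu}$ under any $\nu$ sufficiently close in total variation (the existence does not require closeness, only that the same assumptions hold; since we have a fixed game, A3--A5 transfer to every information structure in question). Let $M := \|c\|_\infty < \infty$ by A1.

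Next, the main step is a two-sided bound. Fix $\mu$ and $\nu$ with $\|\mu - \nu\|_{TV} \le \epsilon$. Using $\bar{\gamma}^*_\mu$ as a (sub-optimal) team policy under $\nu$ yields
\begin{align*}
J^*(c,\nu) - J^*(c,\mu)
&\le J(c,\nu,\bar{\gamma}^*_\mu) - J(c,\mu,\bar{\gamma}^*_\mu) \\
&= \int c(x, \gamma^{1,*}_\mu(y^1), \ldots, \gamma^{n,*}_\mu(y^n))\,(\nu - \mu)(dx, dy^1, \ldots, dy^n).
\end{align*}
Viewing $(x, y^1, \ldots, y^n) \mapsto c(x, \gamma^{1,*}_\mu(y^1), \ldots, \gamma^{n,*}_\mu(y^n))$ as a fixed measurable bounded function with sup-norm at most $M$, the total variation characterization in \eqref{TValternative} gives that this difference is at most $M\|\mu - \nu\|_{TV} \le M\epsilon$. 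The symmetric argument, instead inserting $\bar{\gamma}^*_\nu$ as a sub-optimal policy under $\mu$, gives the reverse bound $J^*(c,\mu) - J^*(c,\nu) \le M\epsilon$. Combining,
\[
|J^*(c,\mu) - J^*(c,\nu)| \le M \|\mu - \nu\|_{TV}.
\]

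Finally, applying this with $\nu = \mu_n$ and letting $\|\mu_n - \mu\|_{TV} \to 0$ gives $J^*(c,\mu_n) \to J^*(c,\mu)$, which is the desired continuity. The argument extends immediately from the $n=2$ display used to motivate A3 to general $n$. The only subtle point, and the main potential obstacle, is ensuring existence of team-optimal policies under each $\mu_n$ so that $\bar{\gamma}^*_{\mu_n}$ is actually attained; this is handled by Assumptions A3--A5 applied to the (fixed) game together with each $\mu_n$, via the cited existence result. Otherwise one works with $\epsilon$-optimal policies and the inequality goes through identically, since the suboptimality gap is absorbed into the bound $M\epsilon + \epsilon$ which still vanishes as $n \to \infty$.
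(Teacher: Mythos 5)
Your proposal is correct and follows essentially the same argument as the paper: insert the optimal policy from one information structure as a suboptimal policy under the other, and bound the resulting difference of integrals of a fixed bounded measurable function by $M\|\mu-\nu\|_{TV}$. The only cosmetic difference is that you carry out both directions explicitly while the paper argues one direction without loss of generality; your added remark about $\epsilon$-optimal policies is a harmless safeguard.
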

\begin{proof}

An optimal solution exists for any given information structure under the assumptions \cite[Section 5]{YukselWitsenStandardArXiv}. Let $\mu$ and $\nu$ be two information structures, and let $\gamma_\mu^{i,*}$ and $\gamma_{\nu}^{i,*}$ denote Player $i$'s individual strategy as part of the team-optimal strategies $\bar{\gamma}^*_\mu$, $\bar{\gamma}^*_\nu$ under each respective information structure. Let the cost function be bounded in absolute value by $M$. 

Without loss of generality, assume $J^*(c, \mu) - J^*(c,\nu) \geq 0$. Then
\begin{align*}
    &J^*(c, \mu) - J^*(c, \nu)  = \int c(x, \gamma^{1,*}_{\mu}(y^1), \dots, \gamma^{n,*}_{\mu}(y^n))\mu(dx, dy^1, \dots, dy^n) \\ & \quad \quad \quad \quad \quad \quad \quad \quad \quad \quad \quad \quad - \int c(x, \gamma^{1,*}_{\nu}(y^1), \dots, \gamma^{n,*}_{\nu}(y^n))\nu(dx, dy^1, \dots, dy^n) \\
    & \leq \int c(x, \gamma^{1,*}_{\nu}(y^1), \dots, \gamma^{n,*}_{\nu}(y^n))\mu(dx, dy^1, \dots, dy^n) \\ & \quad \quad \quad \quad - \int c(x, \gamma^{1,*}_{\nu}(y^1), \dots, \gamma^{n,*}_{\nu}(y^n))\nu(dx, dy^1, \dots, dy^n) \leq M\|\mu - \nu \|_{TV}.
\end{align*}
Where the first inequality holds by perturbing the strategy under $\mu$ to be $\bar{\gamma}^*_\nu$, rather than the team-optimal policy $\bar{\gamma}^*_\mu$, and the second inequality holds by the definition of total variation distance. The result follows. 
\end{proof}

\begin{theorem}
If player action spaces are compact and Assumptions A2, A3, A4 and A6 hold, the value function is continuous under weak convergence of the priors. 
\end{theorem}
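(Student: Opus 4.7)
I would adapt the argument of Theorem \ref{ZSprior} to the team setting. Fix the independent channels $Q^1,\ldots,Q^n$ as in A6, and write $\mu = \zeta \prod_i Q^i$ and $\mu_m = \zeta_m \prod_i Q^i$ for the induced information structures under priors $\zeta$ and $\zeta_m$, respectively. Since A2 and A4 together imply A5, and A3 is assumed, the existence results in \cite[Section 5]{YukselWitsenStandardArXiv} yield team-optimal policy tuples $\bar\gamma^* = (\gamma^{1,*},\ldots,\gamma^{n,*})$ under $\mu$ and $\bar\gamma^*_m = (\gamma^{1,*}_m,\ldots,\gamma^{n,*}_m)$ under each $\mu_m$. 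Let $M := \|c\|_\infty < \infty$ by A2.

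Next I would run the swap argument from the proof of Theorem \ref{teamtheorem}: in the case $J^*(c,\mu) \ge J^*(c,\mu_m)$, replace $\bar\gamma^*$ under $\mu$ by the suboptimal $\bar\gamma^*_m$ (which can only increase the cost), and in the reverse case replace $\bar\gamma^*_m$ under $\mu_m$ by $\bar\gamma^*$. Since the common channels factor out, this produces
\begin{align*}
|J^*(c,\mu) - J^*(c,\mu_m)| \le \max\Bigl\{ \Bigl|\!\int\! (\zeta - \zeta_m)(dx)\, f_m(x)\Bigr|,\; \Bigl|\!\int\! (\zeta - \zeta_m)(dx)\, f(x)\Bigr| \Bigr\},
\end{align*}
where $f_m(x) := \int c(x, \bar\gamma^*_m(\bar y))\, \prod_i Q^i(dy^i|x)$ and $f(x) := \int c(x, \bar\gamma^*(\bar y))\, \prod_i Q^i(dy^i|x)$.

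The core of the proof is to show that the family $\{f_m\} \cup \{f\}$ is uniformly bounded (immediate from $\|c\|_\infty \le M$) and equicontinuous on $\mathbb{X}$. For equicontinuity at $x$, I would use the decomposition
\[
|f_m(x) - f_m(x')| \le \sup_{\bar u \in \prod_i \mathbb{U}^i} |c(x,\bar u) - c(x',\bar u)| + M\, \Bigl\|\textstyle\prod_i Q^i(\cdot|x) - \prod_i Q^i(\cdot|x')\Bigr\|_{TV}.
\]
The first term tends to $0$ as $x' \to x$ \emph{uniformly in $\bar u$} by continuity of $c$ and compactness of $\prod_i \mathbb{U}^i$ (A2, A4), via a standard finite-subcover argument. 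For the second term I would bound the product total variation by the sum $\sum_i \|Q^i(\cdot|x) - Q^i(\cdot|x')\|_{TV}$ (a short coupling or induction), which vanishes by A6. Crucially, neither bound depends on $\bar\gamma^*_m$, so equicontinuity is \emph{uniform in $m$}.

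With equicontinuity and uniform boundedness established, I would invoke \cite[Corollary 11.3.4]{dud02}, exactly as in Theorem \ref{ZSprior}, to conclude that weak convergence $\zeta_m \to \zeta$ forces $\int (\zeta - \zeta_m)(dx) f_m(x) \to 0$, and similarly for $f$. Both terms in the max then vanish, giving $J^*(c,\mu_m) \to J^*(c,\mu)$. The main obstacle is the equicontinuity step, because the optimal team policies $\bar\gamma^*_m$ are merely measurable and may behave wildly as $m$ varies; the rescue is that A6 lets TV-continuity of the product channel absorb the policy dependence uniformly, which is precisely where the hypothesis of fixed, independent, TV-continuous channels becomes indispensable.
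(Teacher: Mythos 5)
Your proposal follows essentially the same route as the paper: the swap argument from Theorem \ref{teamtheorem} to reduce the difference of values to integrals of $f_m$ and $f$ against $(\zeta-\zeta_m)$, then equicontinuity of these families (which the paper delegates to the argument of Theorem \ref{ZSprior} and \cite[Theorem 2.5]{KYSICONPrior}) combined with \cite[Corollary 11.3.4]{dud02}. Your explicit decomposition of $|f_m(x)-f_m(x')|$ into the modulus of continuity of $c$ over the compact action space plus $M$ times the product-channel total variation (telescoped into a sum over coordinates) correctly fills in the equicontinuity step that the paper leaves implicit, so the proof is sound and matches the paper's approach.
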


\begin{proof}
Let $\zeta_m$ be a sequence of priors on $\mathbb{X}$ converging weakly to $\zeta$. Without loss of generality we assume that $J^*(g, Q^1\dots Q^n\zeta) - J^*(g, Q^1\dots Q^n\zeta_m) \geq 0$ for some game $g$. 
Following the same procedure as in Theorem \ref{teamtheorem}, we get
\begin{align}
    &J^*(g, Q^1\dots Q^n\zeta) - J^*(g, Q^1\dots Q^n\zeta_m)& \nonumber \\ &\quad \leq \int(\zeta - \zeta_m)(dx)\int c(x, {\gamma}^{1,*}_{m}(y^1), \dots, \gamma^{n,*}_{m}(y^n))Q^1(dy^1|x)\dots Q^n(dy^n|x). \nonumber
\end{align}
Continuing as in the proof of Theorem \ref{ZSprior}, using equicontinuity, the result follows. 
\end{proof}

\begin{theorem}
Assume Assumptions A1, A3, A4, A5, and A7 hold. Let $\mu_m$ be a sequence of information structures converging weakly to an information structure $\mu$. If the prior is not fixed, then we also impose continuity in $x$ (under A2). Under these conditions the value function is upper semicontinuous in $\mu$ under weak convergence.
\end{theorem}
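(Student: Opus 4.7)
\medskip\noindent\textbf{Proof plan.} The strategy is to bound $J^*(c,\mu_m)$ from above by $J(c,\mu_m,\bar f)$ for any continuous team policy $\bar f$, pass to the limit using a $w$-$s$ convergence argument, and then approximate a team-optimal policy under $\mu$ by continuous policies via Lusin's theorem. Concretely, the plan is to establish that for every $\bar f = (f^1,\dots,f^n)$ whose components are continuous, $\limsup_m J^*(c,\mu_m) \le J(c,\mu,\bar f)$, and then show that the infimum of $J(c,\mu,\cdot)$ over continuous policies equals $J^*(c,\mu)$. Unlike the zero-sum case (Theorem \ref{zerosumweak}), no monotonicity along a garbled sequence is needed, because the inequality $J^*(c,\mu_m) \le J(c,\mu_m,\bar f)$ follows directly from $\bar f$ being admissible for the team.

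For the first bound, fix a continuous $\bar f$. By A5, $c(x,\cdot,\dots,\cdot)$ is continuous on $\mathbb{U}^1\times\cdots\times\mathbb{U}^n$ for every $x$, so the integrand $(x,y^1,\dots,y^n) \mapsto c(x,f^1(y^1),\dots,f^n(y^n))$ is bounded, measurable in $x$, and continuous in $(y^1,\dots,y^n)$. When the prior is fixed, the $\mathbb{X}$-marginals of $\mu_m$ are identically $\zeta$ and hence converge setwise trivially; Sch\"al's $w$-$s$ convergence result cited from \cite[Theorem 3.10]{schal1975dynamic} then gives $J(c,\mu_m,\bar f) \to J(c,\mu,\bar f)$. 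When the prior is not fixed, the added A2 makes the integrand jointly continuous in all variables, so ordinary weak convergence of $\mu_m$ suffices. Taking $\limsup$ yields $\limsup_m J^*(c,\mu_m) \le J(c,\mu,\bar f)$ for every continuous $\bar f$.

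For the second step, let $\bar\gamma^* = (\gamma^{1,*},\dots,\gamma^{n,*})$ be a team-optimal policy under $\mu$; its existence under A1, A3, A4, A5 is guaranteed by \cite[Section 5]{YukselWitsenStandardArXiv}. Given $\epsilon > 0$, apply Lusin's theorem componentwise with respect to each marginal $\mu_{\mathbb{Y}^i}$: A4 and A7 together (compactness and convexity of $\mathbb{U}^i \subset \mathbb{R}^k$) allow the continuous restriction to be extended to all of $\mathbb{Y}^i$ via Tietze's theorem followed by projection onto $\mathbb{U}^i$, producing continuous $f^i : \mathbb{Y}^i \to \mathbb{U}^i$ with $\mu_{\mathbb{Y}^i}(\{f^i \ne \gamma^{i,*}\}) < \epsilon$. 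Since $\|c\|_\infty \le M$ and the integrand differs between $\bar f$ and $\bar\gamma^*$ only on $\bigcup_i \{f^i \ne \gamma^{i,*}\}$, a union bound yields $|J(c,\mu,\bar f) - J^*(c,\mu)| \le 2Mn\epsilon$. Combining with the previous step gives $\limsup_m J^*(c,\mu_m) \le J^*(c,\mu) + 2Mn\epsilon$, and letting $\epsilon \downarrow 0$ completes the argument.

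The principal obstacle is the possible lack of joint continuity of $c$ in $(x,u^1,\dots,u^n)$ under A5 alone: plain weak convergence of $\mu_m$ does not by itself guarantee convergence of $\int c(x,\bar f(y))\,\mu_m(dx,dy)$ when $c$ is merely measurable in $x$. The resolution is the $w$-$s$ convergence trick for the fixed-prior case (where the $\mathbb{X}$-marginal is constant and hence setwise convergent), and the stronger A2 hypothesis in the varying-prior case. A secondary technical point is arranging the Lusin approximants to take values in the compact convex set $\mathbb{U}^i$, for which A4 and A7 are precisely the hypotheses that make the Tietze-plus-projection construction go through.
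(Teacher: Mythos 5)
Your proposal is correct and follows essentially the same route as the paper's proof: Lusin's theorem plus convexity of the action spaces to reduce to continuous policies under $\mu$, then the $w$-$s$ convergence argument (fixed prior) or joint continuity under A2 (varying prior) to pass to the limit, and an exchange of $\limsup$ and infimum. Your organization is marginally cleaner in that you invoke Lusin only with respect to the limit measure $\mu$ and use only the trivial inclusion $\bar{\Gamma}_C \subset \bar{\Gamma}$ on the $\mu_m$ side, whereas the paper asserts the full equality of the infima over $\bar{\Gamma}$ and $\bar{\Gamma}_C$ for each $\mu_m$ as well, but this is a cosmetic rather than substantive difference.
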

\begin{proof}
Let $\bar{\Gamma} = \Gamma^1 \times \dots \times \Gamma^n$. Let $\mu_{{\mathbb{Y}}^j}$ be the marginal of $\mu$ on its $(j + 1)$th component. Let $\gamma^j$ be an arbitrary policy for player $j$. The action spaces for players are convex by Assumption A4. Let $M = \|c\|_{\infty}$. Then, by Lusin's theorem, for any $\epsilon >0$, there exists a continuous function $f^j: \mathbb{Y}^j \rightarrow \mathbb{U}^j$ such that $\mu_{{\mathbb{Y}}^j}(\{y^j : f^j(y^j) \neq \gamma^j(y^j)\}) < \epsilon.$

Letting $B^j$ denote the set $B^j = \{y^j : f^j(y^j) \neq \gamma^j(y^j)\}$, and proceeding with this for $j = 1, \dots, n$, we have:
\begin{align}
&\int c(x, f^1(y^1), \dots, f^n(y^n))\mu(dx, dy^1, \dots, dy^n)  \nonumber \\
& \quad \quad < \int c(x, \gamma^1(y^1), \dots, \gamma^n(y^n))\mu(dx, dy^1, \dots, dy^n) + nM\epsilon. \nonumber   
\end{align}

Denote by $\Gamma^j_C$ the policy space for player $j$ in which the player's policy is continuous from $\mathbb{Y}^j$ to $\mathbb{U}^j$. By the above, it follows that the value for a game will be the same if each player uses $\Gamma$ or $\Gamma_C$. 
Thus, applying the fact that $\mu_m$ converges weakly to $\mu$: 
\begin{align*}
    &\limsup_{m \rightarrow \infty} \inf_{\bar{\gamma} \in \bar{\Gamma}} \int c(x, \gamma^1(y^1), \dots, \gamma^n(y^n))\mu_m(dx, dy^1, \dots, dy^n) \\ 
    &= \limsup_{m \rightarrow \infty} \inf_{\bar{\gamma}_C \in \bar{\Gamma}_C} \int c(x, \gamma^1_C(y^1), \dots, \gamma^n_C(y^n)) \mu_m(dx, dy^1, \dots, dy^n) \\
    &\leq \inf_{\bar{\gamma}_C \in \bar{\Gamma}_C} \limsup_{m \rightarrow \infty} \int c(x, \gamma^1_C(y^1), \dots, \gamma^n_C(y^n)) \mu_m(dx, dy^1, \dots, dy^n) \\
    &= \inf_{\bar{\gamma}_C \in \bar{\Gamma}_C} \int c(x, \gamma^1_C(y^1), \dots, \gamma^n_C(y^n))\mu(dx, dy^1, \dots, dy^n) \\
    &= \inf_{\bar{\gamma} \in \bar{\Gamma}} \int c(x, \gamma^1(y^1), \dots, \gamma^n(y^n))\mu(dx, dy^1, \dots, dy^n).
\end{align*}

We note that if the prior is fixed, in the inequality above we do not need continuity of $c$ in $x$ since weak convergence of a product measure with a fixed marginal is equivalent to $w$-$s$ convergence of the joint measure, see Section \ref{convergenceIS}. If the prior is not fixed, then we impose continuity in $x$ also under A2 and the inequality holds.
\end{proof}

Following a similar argument, without Lusin's theorem, we have the following.
\begin{theorem}
Assume Assumptions A1, A3, A4 and A5 hold. Let $\mu_m$ be a sequence of information structures converging setwise to an information structure $\mu$. Then the value function is upper semicontinuous in $\mu$ under setwise convergence. 
\end{theorem}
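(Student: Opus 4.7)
The plan is to mirror the proof of the preceding (weak-convergence) theorem, simply deleting the Lusin approximation step: that step existed only to convert bounded measurable policies into bounded continuous ones so that the weak-convergence hypothesis could be applied, and under setwise convergence no such conversion is needed. First I would invoke Assumptions A1, A3, A4, A5 together with \cite[Section 5]{YukselWitsenStandardArXiv} to secure the existence of team-optimal policies under each $\mu_m$ and under $\mu$, so that $J^*(c, \mu_m)$ and $J^*(c, \mu)$ are well-defined values attained by some strategies.

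Next I would fix an arbitrary team policy $\bar{\gamma} = (\gamma^1, \ldots, \gamma^n) \in \bar{\Gamma}$ and observe that the induced test function
\[
(x, y^1, \ldots, y^n) \longmapsto c(x, \gamma^1(y^1), \ldots, \gamma^n(y^n))
\]
is bounded by A1 and measurable as a composition of measurable maps. Hence the defining property of setwise convergence directly yields
\[
\int c(x, \gamma^1(y^1), \ldots, \gamma^n(y^n)) \, \mu_m(dx, dy^1, \ldots, dy^n) \longrightarrow \int c(x, \gamma^1(y^1), \ldots, \gamma^n(y^n)) \, \mu(dx, dy^1, \ldots, dy^n).
\]

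Finally, I would close the argument using the elementary inequality $\limsup_m \inf_{\bar{\gamma}} \leq \inf_{\bar{\gamma}} \limsup_m$:
\begin{align*}
\limsup_{m \to \infty} J^*(c, \mu_m)
&= \limsup_{m \to \infty} \inf_{\bar{\gamma} \in \bar{\Gamma}} \int c(x, \gamma^1(y^1), \ldots, \gamma^n(y^n)) \, \mu_m(dx, dy^1, \ldots, dy^n) \\
&\leq \inf_{\bar{\gamma} \in \bar{\Gamma}} \lim_{m \to \infty} \int c(x, \gamma^1(y^1), \ldots, \gamma^n(y^n)) \, \mu_m(dx, dy^1, \ldots, dy^n) \\
&= \inf_{\bar{\gamma} \in \bar{\Gamma}} \int c(x, \gamma^1(y^1), \ldots, \gamma^n(y^n)) \, \mu(dx, dy^1, \ldots, dy^n) = J^*(c, \mu),
\end{align*}
which is exactly the stated upper semicontinuity.

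There is essentially no substantive obstacle here; the absence of a Lusin step is precisely what permits one to dispense with the convexity hypothesis A7 (and with any continuity-in-$x$ assumption on $c$) that was required in the weak-convergence counterpart. The only point requiring care is verifying that for a fixed measurable policy the induced integrand belongs to the test class for setwise convergence, which is immediate from A1 and the measurability of policies.
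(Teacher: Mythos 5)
Your proposal is correct and is exactly the argument the paper intends: the paper omits the proof with the remark that it follows the weak-convergence theorem's proof ``without Lusin's theorem,'' which is precisely your plan of testing setwise convergence against the bounded measurable integrand induced by a fixed policy and then applying $\limsup_m \inf_{\bar\gamma} \leq \inf_{\bar\gamma}\limsup_m$. Your observation that this is why the convexity hypothesis A7 and continuity in $x$ can be dropped is also consistent with the paper's statement of the theorem.
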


\begin{proof}
By interchanging the limit superior and the infimum, and then applying the fact that $\{\mu_m\} \rightarrow \mu$ setwise, we get: 
\begin{align*}
 &\limsup_{m \rightarrow \infty}\inf_{\bar{\gamma} \in \bar{\Gamma}}\int c(x, \gamma^1(y^1), \dots, \gamma^n(y^n))\mu_m(dx, dy^1, \dots, dy^n) \nonumber \\
 &\leq \inf_{\bar{\gamma} \in \bar{\Gamma}}\limsup_{m \rightarrow \infty} \int c(x, \gamma^1(y^1), \dots, \gamma^n(y^n))\nonumber \mu_m(dx, dy^1, \dots, dy^n) \nonumber \\
 & = \inf_{\bar{\gamma}\in \bar{\Gamma}}\int c(x, \gamma^1(y^1), \dots, \gamma^n(y^n))\mu(dx, dy^1, \dots, dy^n)\nonumber.
\end{align*}
\end{proof}

\section{General Games}

While we have shown that zero-sum games and team problems both exhibit continuity under total variation convergence of information structures for games with measurable and bounded cost functions, here we present a counterexample that reveals this is not true for general non-zero-sum games.

\begin{example}
Consider a two-player game with state space $\mathbb{X}$ = $[-1,1]$ endowed with the continuous uniform distribution as $\zeta$. Let $\mathbb{Y}^1 = \{1,2,3\}$ and $\mathbb{Y}^2 = \{0\}$, and $\mathbb{U}^1 = \mathbb{U}^2 = [-1,1]$. For $m \in \mathbb{Z}_{\geq 1}$, define information structure $\mu_m$ by channels $Q^1_m$ and $Q^2_m$ for players 1 and 2 respectively, where $Q^1_m$ is a quantizer with three bins:

\begin{equation}
y_m^1 = \begin{cases}
    1, \quad x \in [-1, -\frac{1}{2} - \frac{1}{8m}) \\ 2, \quad x \in [-\frac{1}{2} -\frac{1}{8m}, \frac{1}{2} + \frac{1}{4m}] \\ 3, \quad x \in (\frac{1}{2} + \frac{1}{4m}, 1] \nonumber
    \end{cases}    
\end{equation}

and $Q^2_m$ returns $y^2 = 0$ for all $x \in \mathbb{X}$. 

Following \cite[Theorem 5.7]{YukselOptimizationofChannels}, since our information structure is defined by quantizers that converge setwise at input $\zeta$, $\{\mu_m\} \rightarrow \mu$ in total variation, where $\mu$ is defined by the quantizer for Player 1 which sorts $x$ into the bins $[-1, -1/2)$, $[-1/2, 1/2]$, and $(1/2, 1]$, and player 2 has the same channel that always returns $y^2 = 0$. 

Now we define the following cost functions for the players:
\begin{equation}
        c^1(x,u^1,u^2) = (x-u^1)^2 - (u^2)^2, \quad \quad c^2(x,u^1,u^2) = \begin{cases} (u^2)^2, \quad u^1 = 0 \\  (u^2-1)^2, \quad u^1 \neq 0
    \end{cases}  \nonumber
\end{equation}
For each $m$, Player 1's optimal strategy is to minimize $(x - u^1)^2$, and thus his optimal policy is to play $u^1 = E_{\mu_n}[x|y^1]$. Due to the asymmetry of Player 1's information channel, $u^1 \neq 0 \: \forall m$. Thus, Player 2's optimal strategy is to play $u^2 = 1$, and her expected cost is $0$. 

However, under $\mu$, Player 1 will play $0$ with probability $1/2$. Thus, Player 2's optimal strategy is to play $u^2 = 1/2$, and her expected cost is $1/4$. Thus, the value function for Player 2 is not continuous under total variation convergence of the information structure. 
\end{example}

We conclude that general non-zero-sum games do not necessarily exhibit continuity under total variation, weak, or setwise convergence of information structures for games with measurable and bounded cost functions (as total variation is a stronger notion of convergence than both weak and setwise convergence).


\section{Conclusion}
We presented continuity properties of value functions and equilibrium solutions in zero-sum, team, and general game problems with respect to information structures.
It was shown that the value function for both zero-sum games and team problems is continuous under total variation convergence of information structures. In both cases, the change in expected value when switching between two information structures is bounded above by the product of the total variation distance and the $L^{\infty}$ norm of the cost function. For zero-sum games, the value function is upper semicontinuous for minimizer-garbling sequences of information structures, and lower semicontinuous for sequences of maximizer-garbling information structures under weak (if the cost function is continuous and bounded and the action spaces are convex) or setwise (if the cost function is measurable and bounded) convergence. For team problems, the value function is upper semicontinuous under setwise convergence for measurable and bounded cost functions, and upper semicontinuous under weak convergence for bounded and continuous cost functions when the players' action spaces are convex. A counterexample revealed players in general non-zero-sum games may not have value functions that are continuous under total variation convergence of information structures, even when the cost functions are bounded. 

While we studied static games and teams in the paper, since it is known that under absolute continuity conditions there is an isomorphism relationship between equilibrium solutions to dynamic teams/games and their static reductions (which turns out to be policy independent) \cite{sanjari2021gameoptimality}, the results also apply to such dynamic games with absolutely continuous information structures.

\bibliographystyle{plain}
\bibliography{IanBib,SerdarBibliography}

\end{document}